\DeclareFontFamily{U}{mathx}{\hyphenchar\font45}
\DeclareFontShape{U}{mathx}{m}{n}{
      <5> <6> <7> <8> <9> <10>
      <10.95> <12> <14.4> <17.28> <20.74> <24.88>
      mathx10
      }{}
\DeclareSymbolFont{mathx}{U}{mathx}{m}{n}
\DeclareMathAccent{\widecheck}{0}{mathx}{"71}
\DeclareMathAccent{\wideparen}{0}{mathx}{"75}
\newtheorem{theorem}{Theorem}
\newtheorem{lemma}[theorem]{Lemma}
\newtheorem{remark}[theorem]{Remark}
\theoremstyle{definition}
\newtheorem{example}{Example}
\newcommand{\diam}{\operatorname*{diam}}
\newcommand{\III}[1]{{\textstyle\left\vert\kern-0.25ex\left\vert\kern-0.25ex\left\vert #1 
    \right\vert\kern-0.25ex\right\vert\kern-0.25ex\right\vert}}
\newcommand{\tri}{\mathcal{T}}
\newcommand{\triH}{\tri_H}
\newcommand{\trih}{\tri_h}
\newcommand{\nodesH}{\mathcal{N}_H}
\newcommand{\nodesfreeH}{\mathcal{N}_H^{\rm free}}
\newcommand{\nodesIH}{\mathcal{N}_H^{\rm I}}
\newcommand{\nodesIIH}{\mathcal{N}_H^{\rm II}}
\newcommand{\R}{\mathbb{R}}
\newcommand{\VH}{V_H}
\newcommand{\Vf}{V^{\operatorname*{f}}}
\newcommand{\VmsH}{V^{\operatorname*{ms}}_H}
\newcommand{\VmsHk}{V^{\operatorname*{ms}}_{H,k}}
\newcommand{\vf}{v^{\operatorname*{f}}}
\newcommand{\uf}{u^{\operatorname*{f}}}
\newcommand{\ufk}{u^{\operatorname*{f}}_k}
\newcommand{\umsH}{u^{\operatorname*{ms}}_H}
\newcommand{\ufH}{u^{\operatorname*{f}}}
\newcommand{\vmsHk}{v^{\operatorname*{ms}}_{H,k}}
\newcommand{\umsHk}{u^{\operatorname*{ms}}_{H,k}}
\newcommand{\umsLHk}{u^{\operatorname*{ms,rhs}}_{H,k}}
\newcommand{\IHnodal}{I_H^{\rm nodal}}
\newcommand{\IHsz}{I_H^{\rm SZ}}
\newcommand{\IHproj}{I_H^{A}}
\newcommand{\IHprojs}{I_H^{A,\rm{qm}}}
\newcommand{\IH}{I_H}
\newcommand{\IHinf}{I_{H,1}}
\newcommand{\IIH}{\mathcal{I}_H}
\newcommand{\IHdelta}{I_{H,\delta}}
\newcommand{\Cint}{{\mathcal{I}}}
\newcommand{\id}{\operatorname*{Id}}
\newcommand{\Cdiam}{\ell}
\newcommand{\Cpoinc}{p}
\newcommand{\MW}{M}
\newcommand{\dd}{\, \mathrm{d}}
\newcommand{\fine}{fine}
\DeclareMathOperator*{\supp}{supp}
\newtheorem{assumption}{Assumption}
\title{Contrast independent localization of multiscale problems}
\author{Fredrik Hellman%
  \thanks{Department of Information Technology, Uppsala University, Box 337, SE-751 05 Uppsala, Sweden. Supported by Centre for Interdisciplinary Mathematics (CIM), Uppsala University.}
  \and
  Axel M\r{a}lqvist%
  \thanks{Department of Mathematical Sciences,
    Chalmers University of Technology and
    University of Gothenburg 
    SE-412 96 G\"oteborg, Sweden. Supported by the Swedish Research Council.}
}
\begin{document}
\maketitle

\begin{abstract}
  The accuracy of many multiscale methods based on localized
  computations suffers from high contrast coefficients since the
  localization error generally depends on the contrast. We study a
  class of methods based on the variational multiscale method, where
  the range and kernel of a quasi-interpolation operator defines the
  method. We present a novel interpolation operator for two-valued
  coefficients and prove that it yields contrast independent
  localization error under physically justified assumptions on the
  geometry of inclusions and channel structures in the
  coefficient. The idea developed in the paper can be transferred to
  more general operators and our numerical experiments show that the
  contrast independent localization property follows.
\end{abstract}



\section{Introduction}
High contrast and multiscale coefficients are frequently encountered
in partial differential equations (PDEs) for a range of
applications. Typical examples of such coefficients are the
permeability field in porous media flow problems, varying by several
orders of magnitude over short distances, and the rapidly varying heat
conductivity in a composite material. In this paper, we focus on the
multiscale method based on localized orthogonal decomposition (LOD)
\cite{MaPe14} and study how to improve its accuracy for high contrast
coefficients. LOD is based on the framework of the variational
multiscale method (VMS) \cite{HuFeGoMaQu98} in the sense that the full
solution space is decomposed into a coarse and a \fine{} subspace,
where this decomposition is determined by the range and kernel of a
quasi-interpolation operator. A new low-dimensional multiscale space
(subsequently used in a Galerkin or Petrov--Galerkin method) is
constructed by computing coarse basis correctors in the \fine{}
space. The correctors have global support, but can be computed on
localized patches around the support of the coarse basis
functions. The approximability of the multiscale space is determined
by the error introduced by the localization to patches, which in turn
depends on the decay of the correctors within the patch. It was proven
in \cite{MaPe14} that this decay is exponential with respect to the
radius of the patch, independent of the fine-scale variations of the
coefficient but not generally independent of the contrast (ratio
between largest and smallest value) of the coefficient. It has also
been observed in numerical experiments that higher contrast
coefficients lead to slower decay of the correctors, particularly
within connected subdomains with large value of the coefficient,
called channels. The decay of correctors (or the related concept of
fine-scale Green's functions in VMS) in both one- and
multi-dimensional settings was studied in \cite{HuSa07} for different
choices of projection operators and it was clearly shown that the
choice of operator has a large impact on the decay rate.

In this paper, we study coefficients $A$ that take two values $\alpha$
and $1$, where $\alpha \ll 1$. This isolates the effect of high
contrast while still capturing many interesting applications, such as
composite materials and subsurface flows. We introduce a novel
Cl\'ement-type quasi-interpolation operator $\IH$ (based on Scott--Zhang
node variables) whose construction forces corrector decay within
channels and prove that the localization error for this operator is
independent of the contrast. The basic idea is to select the
integration domain for each node variable in such a way the operator
kernel admits a contrast independent Poincar\'e-type inequality within
all channels and inclusions in the domain. In practice, this means
that each connected channel and inclusion needs to have dedicated nodes
placed along its extent with a distance proportional to the mesh size
of the coarse mesh. We present sufficient assumptions on the node
placements and a proof for contrast free localization error when the
decomposition is based on $\IH$. The properties of $\IH$ are studied
both theoretically and numerically. We also present a related operator
$\IHinf$ which numerically performs even better, but for which our
proof does not give any guarantees. They both, however, follow the
basic idea of carefully selecting integration domain for the node
variables, suggesting that this is a key to contrast independent
localization. 

In addition to the VMS based methods, the literature on numerical
homogenization for elliptic multiscale problems includes the
multiscale finite element method (MsFEM) \cite{HoWu97}, the
generalized multiscale finite element method (GMsFEM) \cite{EfGaHo13},
the heterogeneous multiscale method (HMM) \cite{EEn03}, and
polyharmonic homogenization \cite{OwZhBe14}. The issue of high
contrast coefficients without assumptions on periodicity has been
addressed recently by many authors. For example, by using multiscale
finite element approaches in \cite{ChGrHo10, EfGaWu11}, flux norm
approaches in \cite{BeOw2010, OwZh11}, and low-rank approximation of
Green's functions in \cite{Be16}. Our work has similarities to and has
been inspired by the LOD based approach in \cite{PeSc16}, where
contrast independent corrector decay results for $A$-weighted
quasi-interpolation operators were shown under quasi-monotonicity
(\cite{ScVaZi12}) assumptions on the coefficient distribution within
the node patches. The idea of selecting integration domain for the
node variables can be transferred also to the $A$-weighted
quasi-interpolation operators, so that the integration domain is
selected to guarantee quasi-monotonicity. We include numerical
experiments for an $A$-weighted projective operator of that kind
suggesting that carefully selecting integration domain is important
for contrast independent localization.

The outline of the paper is as follows. Section~\ref{sec:problem}
describes the model problem and gives a review of the localized
orthogonal decomposition method for multiscale
problems. Section~\ref{sec:IH} defines the interpolation operator
$\IH$ and investigates its stability and approximability
properties. Section~\ref{sec:error} shows the contrast independent
localization error following from using $\IH$ and presents the total
error of the multiscale method. Section~\ref{sec:num} briefly reviews
a number of additional interpolation operators and presents a series
of numerical experiments where the accuracy for methods based on the
the different operators on high contrast problems is
investigated. Finally, the numerical results are discussed and related
to the theoretical findings.

\section{Problem formulation and numerical method}
\label{sec:problem}
As model problem, we consider the elliptic PDE, with a two-valued coefficient $A$,
\begin{equation}
  \label{eq:poisson}
  -\operatorname*{div} A \nabla u = f
\end{equation}
on a polygonal domain
$\Omega = \Omega^1 \cup \Omega^\alpha \subset \R^d$ ($d = 1, 2$ or
$3$), where $\Omega^1$ and $\Omega^\alpha$ are disjoint. The
coefficient attains two values $A|_{\Omega^1} = 1$ and
$A|_{\Omega^\alpha} = \alpha$ with $0 < \alpha \le 1$ in the two
subdomains $\Omega^1$ and $\Omega^\alpha$. We impose homogeneous
Dirichlet boundary conditions on $\Gamma \subset \partial \Omega$, and
homogeneous Neumann boundary conditions on
$\partial \Omega \setminus \Gamma$. We allow $\Gamma$ to be empty, in
which case compatibility conditions $\int_\Omega u = 0$ and
$\int_\Omega f = 0$ are imposed. We consider the case when $A$ is a
high-contrast and highly oscillatory coefficient, i.e.\ that
$\alpha \ll 1$, and that $\Omega^1$ and $\Omega^\alpha$ cannot be
approximated well as a union of a set of elements, without having the
element diameter be very small. Examples are given in
Figure~\ref{fig:Aexamples}. Note that the coefficient is determined
completely by $\Omega^1$, $\Omega^\alpha$ and $\alpha$.
\begin{figure}[]
  \centering
  \begin{subfigure}[t]{0.4\textwidth}
    \centering
    \includegraphics[width=4cm, frame]{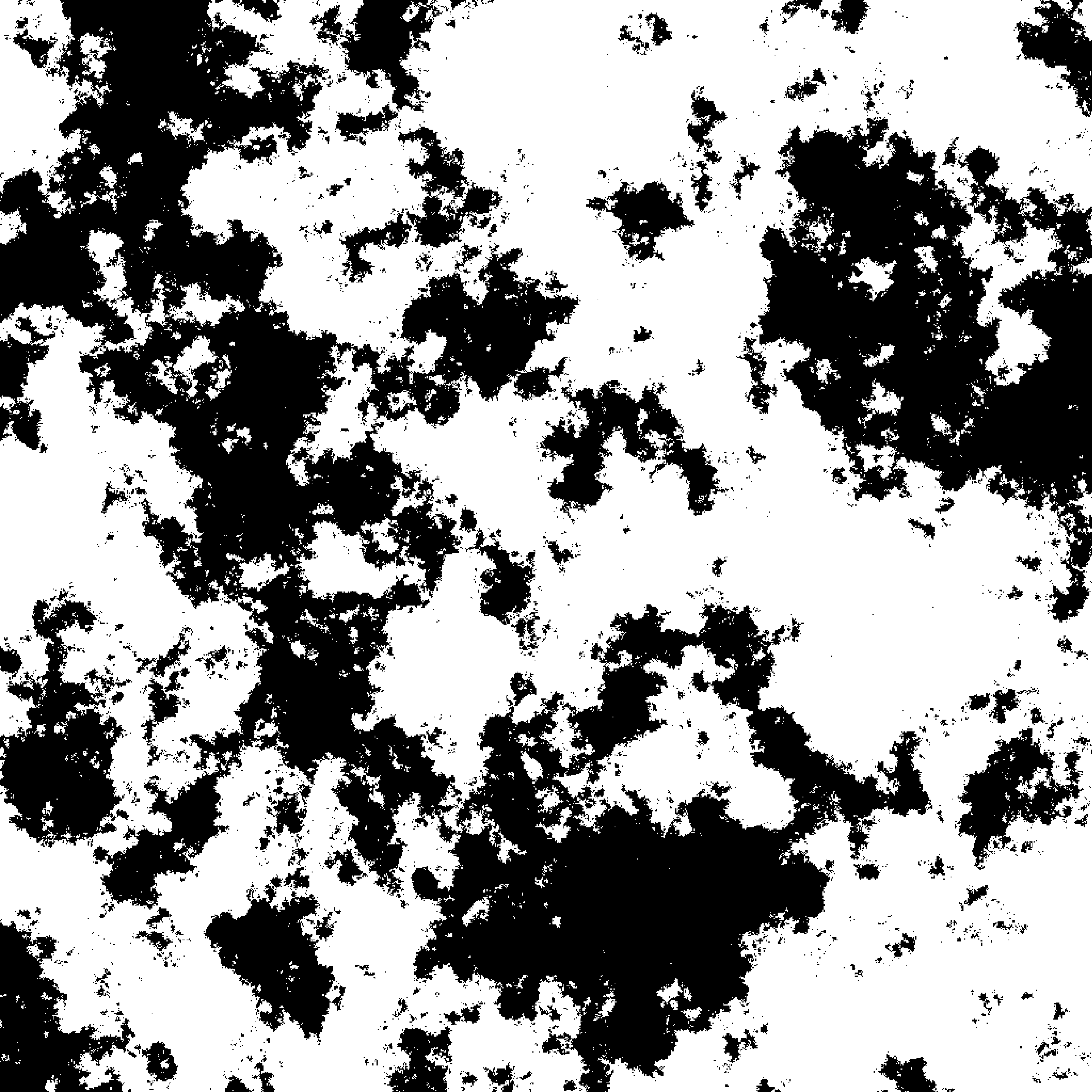}
    \caption{Random field, resembling a heterogeneous porous medium.}
    \label{fig:random_coef}
  \end{subfigure}
  \hspace{1em}
  \begin{subfigure}[t]{0.4\textwidth}
    \centering
    \includegraphics[width=4cm, frame]{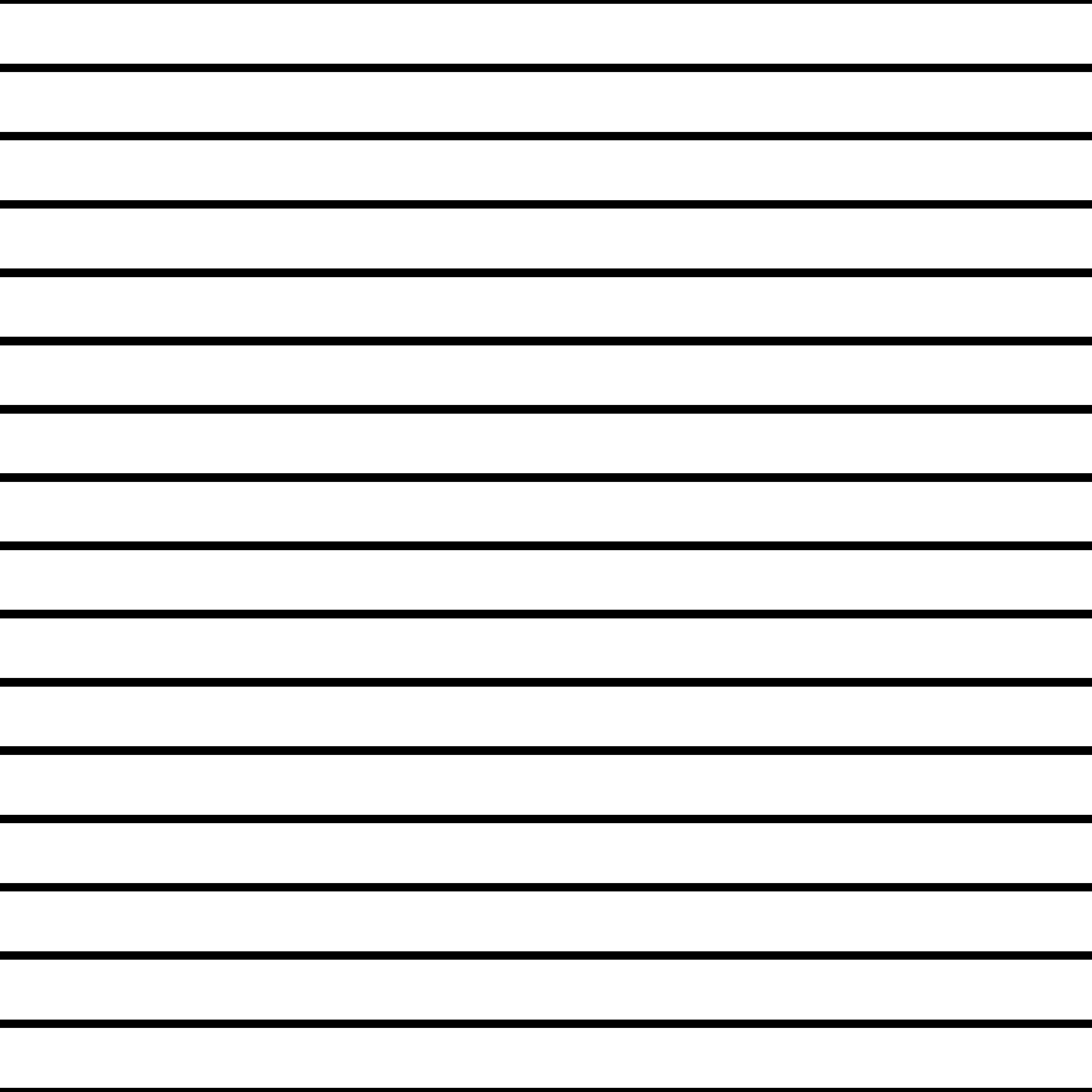}
    \caption{Stripes, resembling cracks or composite materials.}
    \label{fig:stripes_coef}
  \end{subfigure}
  \caption{Examples of coefficient $A$. The square is $\Omega$. Black
    color is $\Omega^1$ and white color is $\Omega^\alpha$.}
  \label{fig:Aexamples}
\end{figure}

We reformulate the problem on weak form. Let
$V = \{v \in H^1(\Omega) \,:\, v|_\Gamma = 0 \}$ and $(\cdot,\cdot)$
denote the scalar product in $L^2(\Omega)$. We introduce a bilinear
form $a$,
\begin{equation*}
  a(u, v) = (A \nabla u, \nabla v),
\end{equation*}
and assume $f \in L^2(\Omega)$. We restate our problem as to find
$u \in V$,
\begin{equation}
  \label{eq:problem}
  a(u, v) = (f, v)
\end{equation}
for all $v \in V$. With the bounds $\alpha \le A \le 1$, we have that
$a$ is bounded and coercive and \eqref{eq:problem} admits a unique
solution by the Lax--Milgram theorem.

We consider the following (coarse scale) finite element
discretization.  Let $\triH$ be a family of conforming triangulations
of $\Omega$ with mesh size parameter $H$. We denote the set of nodes
by $\nodesH$ and the set of free nodes by
$\nodesfreeH \subset \nodesH$. The elements are assumed to be shape
regular and the meshes to be quasi-uniform, i.e.\ we assume there is a
constant $\rho$ independent of $H$ such that for all $H$,
\begin{equation}
  \max_{T\in\triH} \frac{\diam(T)}{\diam(B_T)} \le \rho \quad\text{and}\quad \max_{T,T'\in \triH} \frac{\diam(T)}{\diam(T')} \le \rho,
\end{equation}
where $B_T$ denotes the largest ball contained in $T$. Let $S_H$ be
the standard $\mathcal{P}1$ FE space associated with $\triH$. Further,
let $V_H = S_H \cap V$, satisfying the Dirichlet boundary
conditions. It is well known that convergence for the finite element
method for this problem is generally not achieved unless the
oscillations in $A$ are captured by the mesh. We briefly review the
LOD method which allows for using low-dimensional spaces to find good
approximations to problems with highly oscillatory coefficients. For
more elaborate descriptions of different aspects of this method, see
e.g.\ \cite{EnHeMaPe16, HeMa14, MaPe14}.

\subsection{Quasi-interpolation}
The first step of defining the LOD method is to choose a
quasi-inter\-polation operator $\IIH$ from $V$ onto $\VH$. We call
$V_H$ the coarse space and the infinte-dimensional $V$ the full
space. The coarse space need not resolve the discontinuites of the
coefficient $A$. In practice, the full space is typically also a
finite-dimensional FE space, however, to better convey the new ideas
in this work we use $V$ as full space, since it simplifies the
exposition. See e.g.\ \cite{MaPe14} for the fully discrete
setting. The choice of a quasi-interpolation operator
$\IIH : V \to V_H$ is crucial and defines a \fine{} space $\Vf$ as its
kernel, $\Vf = \ker(\IIH) = \{ v \in V \,:\, \IIH v = 0 \}$.
\begin{assumption}[Quasi-interpolation operator]
  \label{ass:qio}
  We require the quasi-interpolation operator to satisfy:
\begin{enumerate}
\item $\IIH$ is a linear projection onto $\VH$,
\item there exists a constant $C_{\Cint}$ independent of $H$ such that for
  all $v \in V$ and all $T \in \triH$, it holds
    \begin{equation}
      H^{-1}\| v - \IIH v\|_{L^2(T)} + \| \nabla (v - \IIH v) \|_{L^2(T)} \le C_{\Cint} \| \nabla v \|_{U(T)}.
    \end{equation}
    Here we define
    \begin{equation*}
      U(T) = \bigcup \{T' \in \triH \,:\, \overline T \cap \overline {T'} \ne \emptyset \},
    \end{equation*}
    i.e., the union of all neighboring elements to $T$.
\end{enumerate}
\end{assumption}

There are many possible options for choosing $\IIH$ and consequently
defining the \fine{} space $\Vf$. We define the numerical method in
an abstract setting where $\IIH$ only satisfies
Assumption~\ref{ass:qio} and postpone to define a concrete instance of
$\IIH$ to Sections~\ref{sec:IH} and \ref{sec:num}.

\subsection{Localized orthogonal decomposition}
\label{sec:localized}
Given a \fine{} space we define the \emph{non-local corrector operator}
$Q : V \to \Vf$ as an $A$-weighted Ritz-projection onto the \fine{}
space, find $Qv \in \Vf$, such that
\begin{equation}
  \label{eq:idealcorrector}
  a(Qv,\vf) = a(v,\vf)
\end{equation}
for all $\vf \in \Vf$. We introduce the multiscale space
$\VmsH = (\id - Q)V_H = \{v - Qv\,:\,v\in V_H\}$ and note that
$\dim(\VmsH) = \dim(V_H)$. From \eqref{eq:idealcorrector} we observe
that $\VmsH$ and $\Vf$ are orthogonal in the $a$-scalar product. Using
the low-dimensional $\VmsH$ as test and trial space in a standard
Galerkin method yields the following \emph{non-local multiscale method}, find
$\umsH \in \VmsH$, such that
\begin{equation}
  \label{eq:idealmultiscale}
  a(\umsH,v) = (f, v)
\end{equation}
for all $v \in \VmsH$. Galerkin orthogonality yields
$a(u - \umsH, v) = 0$, i.e.\ the error $\ufH = u - \umsH$ is in the
\fine{} space $\Vf$. Using that $\IIH v = 0$ for $v \in \Vf$, we get
an error bound (in energy norm, $\III{\cdot}^2 = a(\cdot, \cdot)$),
for the non-local multiscale method,
\begin{equation}
  \begin{aligned}
    \label{eq:idealaproiri}
  \III{\ufH}^2 & = a(\ufH, \ufH) = (f, \ufH - \IIH \ufH) \\
   & \le \|f\|_{L^2(\Omega)} \|\ufH - \IIH \ufH\|_{L^2(\Omega)} \le C_{\rho, \Cint} \alpha^{-1/2} H \|f\|_{L^2(\Omega)} \III{\ufH}.
  \end{aligned}
\end{equation}
(Here, and in the remainder of the paper, $C$ denotes a function that
depends on the variables or quantities listed in its subscript index
list.) We note that this a priori error bound is independent of the
solution regularity. In a practical implementation of this non-local
multiscale method, the multiscale space $\VmsH$ is spanned by a basis
$\{\phi_i - Q\phi_i\}_i$ where $\{\phi_i\}_i$ is the nodal basis of
$V_H$. This requires the non-local corrector problem
\eqref{eq:idealcorrector} to be solved for each basis function
$\phi_i$ in the \fine{} space, which is roughly as costly as solving
the original problem for each basis function. However, in
\cite{MaPe14} it was proven that $Q\phi_i$ exhibits exponential decay
from the support of $\phi_i$ and the corrector problems allow for
localization with a small sacrifice in accuracy.

For the localization, we define patches
$U_k(\omega) \subset \Omega$, where $0 \le k \in \mathbb{N}$ and
$\omega \subset \Omega$. With trivial case $U_0(\omega) = \omega$, $U_k(\omega)$ is defined by
the recursive relation
\begin{equation*}
  U_{k+1}(\omega) = \bigcup \{T' \in \triH : \overline{U_{k}(\omega)} \cap \overline {T'} \ne \emptyset \}.
\end{equation*}
We generalize the notation presented in Assumption~\ref{ass:qio} and
let $U(\omega) = U_1(\omega)$. In particular, if
$\omega = T \in \triH$, then $U_k(T)$ is a $k$-layer element patch
around $T$. If $\omega = \{z\}$ and $z \in \nodesH$, then (abusing
notation) $U_k(z)$ is a $k$-layer node patch around $z$. See
Figure~\ref{fig:patch} for an illustration of element patches.
\begin{figure}[]
  \centering
  \begin{subfigure}{.4\textwidth}
    \centering
    \includegraphics[width=3cm, clip=true, trim=3cm 1.2cm 3cm 1.2cm]{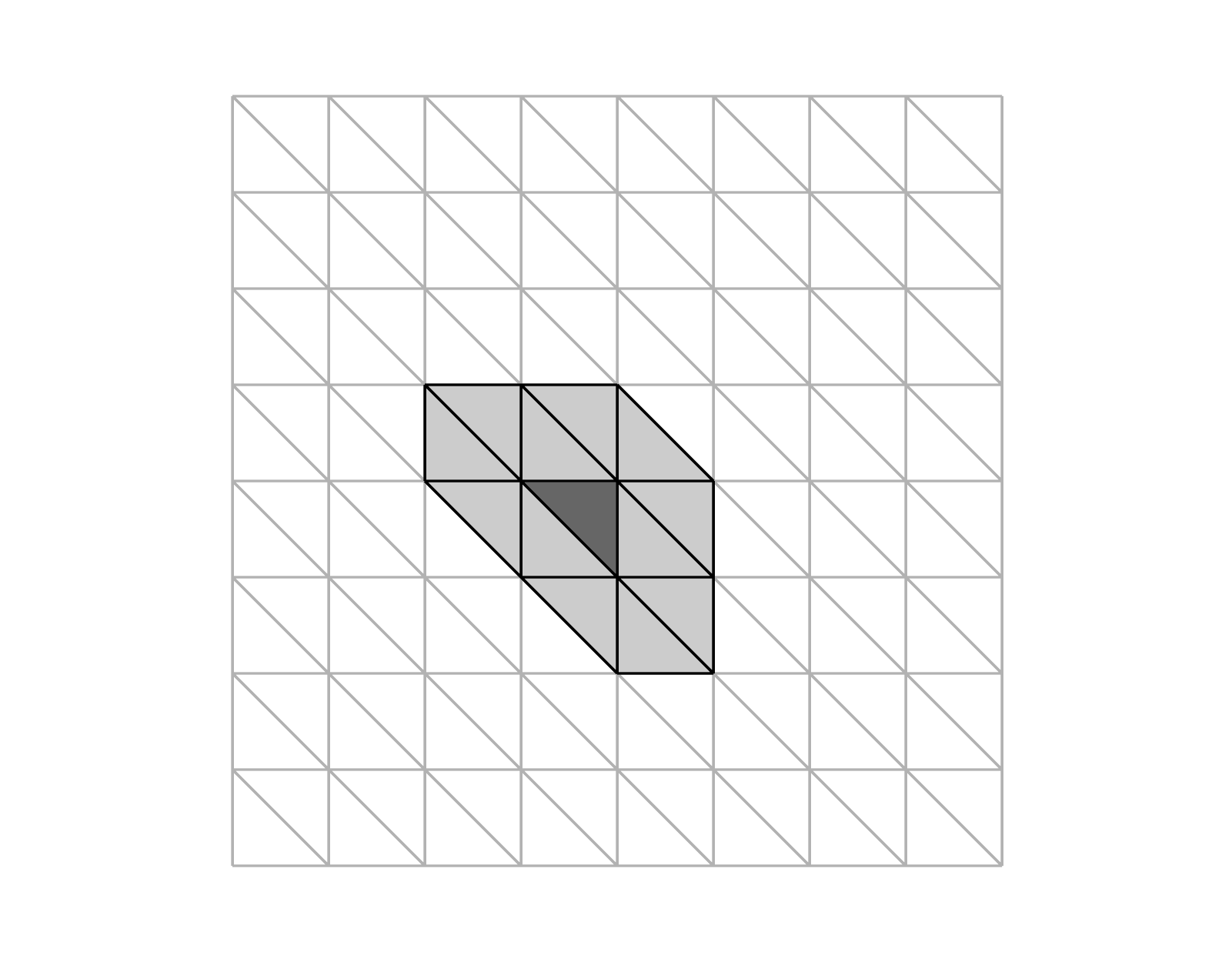}
    \caption{One-layer element patch, $k = 1$.}
  \end{subfigure}
  \hspace{1em}
  \begin{subfigure}{.4\textwidth}
    \centering
    \includegraphics[width=3cm, clip=true, trim=3cm 1.2cm 3cm 1.2cm]{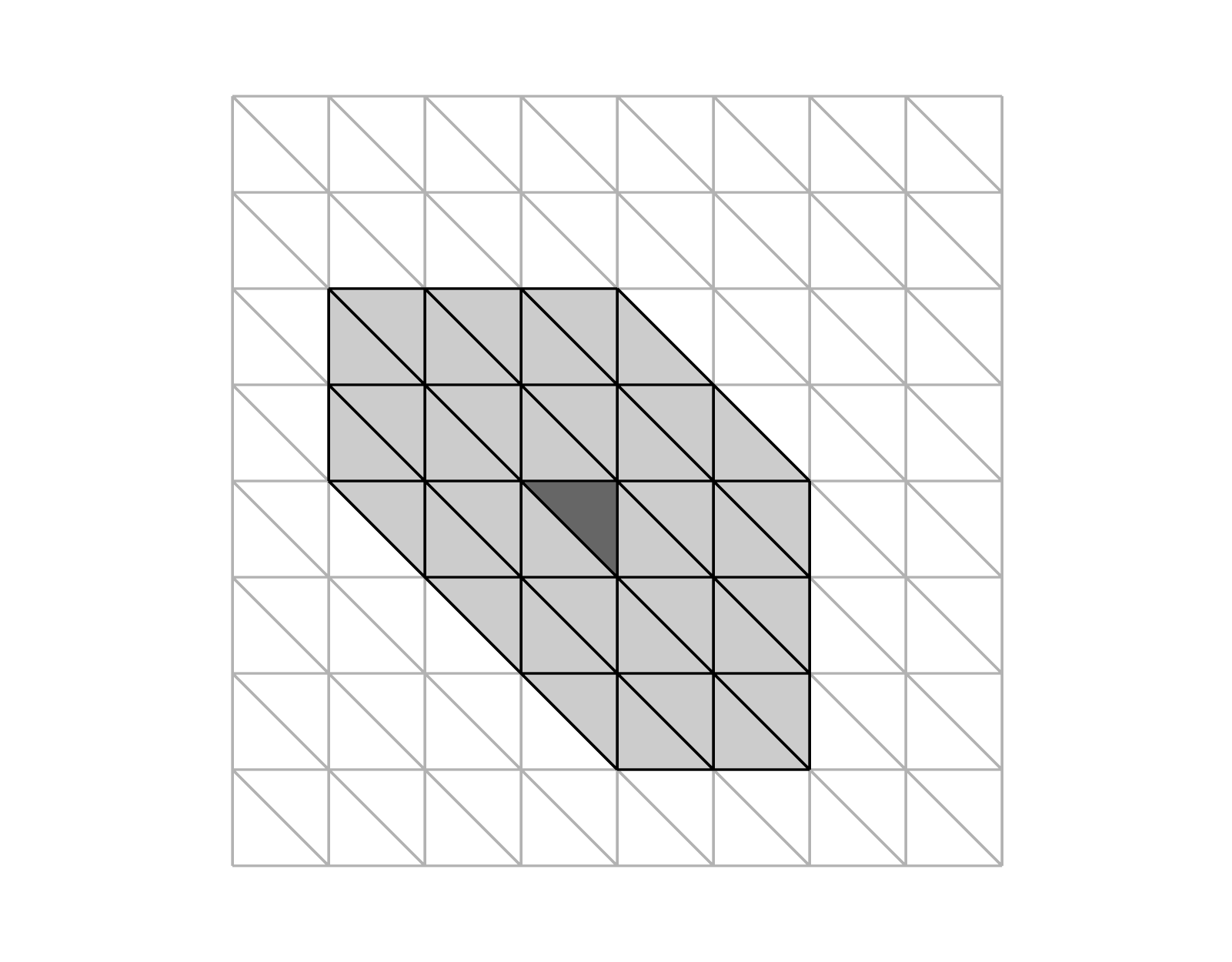}
    \caption{Two-layer element patch, $k = 2$.}
  \end{subfigure}
  \caption{Illustration of $k$-layer element patches. Dark gray is
    $T$.  Light gray is $U_k(T)$.}
  \label{fig:patch}
\end{figure}

Define localized \fine{} spaces
\begin{equation*}
  \Vf(U_k(T)) = \{ v \in \Vf \,:\, v|_{\Omega \setminus U_k(T)} = 0\},
\end{equation*}
consisting of \fine{} functions which are zero outside patches. The
localized corrector operator
$Q_{k}v = \sum_{T \in \triH} Q_{k,T}v$ is a sum of localized patch
corrector operators $Q_{k,T} : V \to \Vf(U_k(T))$, defined by, find
$Q_{k,T} \in \Vf(U_k(T))$, such that
\begin{equation}
  \label{eq:patchcorrector}
  a(Q_{k,T}v,\vf) = \int_T A \nabla v \nabla \vf,
\end{equation}
for all $\vf \in \Vf(U_k(T))$. Note that the problem
\eqref{eq:patchcorrector} is posed on the subdomain $U_k(T)$
only. This localization is what is behind the name localized
orthogonal decomposition. In a practical setting, $Q_{k,T}\phi_i$ is
solved for each coarse basis function $\phi_i$ and triangle $T$. Since
$\phi_i$ has local support, the full localized corrector $Q_k \phi_i$
is a sum of only a few $Q_{k,T}\phi_i$. Now, a localized multiscale
space $\VmsHk = (\id - Q_k)V_H$ is defined, spanned by a basis
$\{\phi_i - Q_k\phi_i\}_i$. The corresponding \emph{localized
  multiscale method} is, find $\umsHk \in \VmsHk$, such that
\begin{equation}
  \label{eq:localizedmultiscale}
  a(\umsHk,v) = (f, v)
\end{equation}
for all $v \in \VmsHk$.

The localization introduces an error that depends on the patch size
$k$. We can decompose the non-local multiscale solution
$\umsH = (\id - Q)u_H$ for a unique $u_H \in V_H$.  By Galerkin
orthogonality using \eqref{eq:problem} and
\eqref{eq:localizedmultiscale}, for any $v \in \VmsHk$, in particular
$v = (\id - Q_k)u_H$, we have:
\begin{equation}
  \label{eq:localizationerror}
  \begin{aligned}
    \III{u - \umsHk} \le \III{u - v} = \III{\uf} + \III{(Q - Q_k)u_H} \le \III{\uf} + C_\rho\alpha^{-1}k^{d/2}\theta(\alpha)^{k}\|f\|_{L^2(\Omega)},\\
  \end{aligned}
\end{equation}
where $0 < \theta(\alpha) < 1$ depends on the contrast $\alpha$ (see
\cite[Theorem 4.6]{MaPe14} or \cite[Theorem 3.7]{HeMa14} for the last
step.) The first term is the error from the non-local multiscale method
\eqref{eq:idealaproiri}, and the second term is the error from
localization, which decays exponentially with patch size $k$. We note
that for a given contrast, we can choose $k \approx \log H^{-1}$ to
make the term $\III{(Q-Q_k)u_H}$ of the same order as $\III{\uf}$,
i.e.\ the error can be kept small by choosing appropriate patch
sizes. However, we also note that the decay rate with respect to $k$
varies with $\alpha$. This is the contrast problem that we address in
this paper. The effect of the contrast problem can be observed in the
section of numerical experiments, Section~\ref{sec:num}, for
quasi-interpolation operator $\IHsz$.

\subsection{Right hand side correction}
\label{sec:rhscorrection}
Although the non-local multiscale method error $\III{\uf}$ converges
with $H$ independent of the regularity of the solution $u$, we are
interested in discarding this error to clearly distinguish the
contrast dependency due to localization, i.e.\ how strongly $\alpha$
influences $\theta(\alpha)$ and the total error. This section
describes how to compute a localized estimate $\ufk$ of $\uf$ to the
same accuracy as the localization.

Introduce the right hand side correction operator $R : V \to \Vf$,
\begin{equation*}
  a(Rv, \vf) = (v, \vf),
\end{equation*}
for all $\vf \in \Vf$. Then the error $\uf = Rf$. This problem can be
split into a number of localized problems, one for each coarse
triangle of which $f$ has support. We construct right hand side
correction operators $R_{k,T} : V \to \Vf(U_k(T))$, find
$R_{k,T}f \in \Vf(U_k(T))$ such that for all $\vf \in \Vf(U_k(T))$,
\begin{equation}
  \label{eq:localizedrhs}
  a(R_{k,T} f, \vf) = \int_T f \vf,
\end{equation}
in analogy with the localized corrector problems.  Note that if $f$
has support only in a few triangles, only a few problems of the kind
above needs to be solved.  The full localized right hand side
correction is $\ufk = R_kf = \sum_{T \in \triH} R_{k,T}f$. It is again
possible to show exponential decay of the localization error in terms
of $k$,
\begin{equation*}
  \III{\uf - \ufk} \le C\alpha^{-1}k^{d/2}\theta(\alpha)^{k}\III{\uf}.
\end{equation*}
Now, we have the following method (localized multiscale method with
right hand side correction)
\begin{enumerate}
\item Compute $R_{k,T} f$ in
  \eqref{eq:localizedrhs} (localized patch problems) for all $T$ for
  which $f|_T \ne 0$ and let $\ufk = \sum_{T \in \triH} R_{k,T}f$.
\item Compute all basis correctors $Q_{k,T} \phi_i$ by
  \eqref{eq:patchcorrector}
  and let
  $\{ \phi_i - Q_{k,T} \phi_i \}_i$ span $\VmsHk$.
\item Find $\umsLHk \in \VmsHk$,
  such that
  \begin{equation}
    \label{eq:localizedcorrectedmultiscale}
    a(\umsLHk,v) = (f, v) - a(\ufk, v)
  \end{equation}
  for all $v \in \VmsHk$.
\item The solution is $u_{H,k} = \umsLHk + \ufk$.
\end{enumerate}
By Galerkin orthogonality using \eqref{eq:problem} and
\eqref{eq:localizedcorrectedmultiscale}, for any $v \in \VmsHk$, in
particular $v = (\id - Q_k)u_H$, we have:
\begin{equation*}
  \label{eq:rhscorrectederror}
  \begin{aligned}
    \III{u - u_{H,k}} & \le \III{u - v - \ufk} \le \III{\uf - \ufk} + \III{(Q - Q_k)u_H} \\
    & \le C\alpha^{-1}k^{d/2}\theta(\alpha)^{k}(1 + \alpha^{-1/2}H)\|f\|_{L^2},
  \end{aligned}
\end{equation*}
i.e.\ the factor $\theta(\alpha)$ influences all error terms, and the
contrast dependency can be easily studied.

\newcommand{\gridcolor}{darkgray}

\newcommand{\gridone} {%
  \foreach \x in {0} {
    \foreach \y in {0} {
      \begin{scope}[shift={(\x,\y)}]
        \draw[\gridcolor] (0,0) grid (1,1);
        \draw[\gridcolor] (0,1) -- (1,0);
      \end{scope}
    }
  }
}

\newcommand{\gridtwo} {%
  \foreach \x in {0,1} {
    \foreach \y in {0,1} {
      \begin{scope}[shift={(\x,\y)}]
        \draw[\gridcolor] (0,0) grid (1,1);
        \draw[\gridcolor] (0,1) -- (1,0);
      \end{scope}
    }
  }
}
\section{Geometry induced quasi-interpolation operator $\IH$}
\label{sec:IH}
This section defines a novel interpolation operator $\IH$. Several
other operators are studied in the numerical experiments in
Section~\ref{sec:num} for evaluation on a series of high contrast
coefficients. However, the theoretical results presented in this paper
all concerns the operator $\IH$ defined in this section.

The main purpose of $\IH$ is to give rise to a space $\Vf$ in which
correctors $Q v$ and right hand side corrections $R f$ decay fast with
the distance from the support of $v$ and $f$, in order to give a small
localization error. Numerical experiments show that the decay rate for
classical Cl\'ement-type interpolation operators can be very low for
high contrast coefficients. In the following, we give a heuristic
argument to why the decay rate is low at high contrast. This argument
also motivates the way $\IH$ is defined. We stress that the purpose of
this argument is to illustrate the idea behind the construction of
$\IH$, and it is not necessary for the results presented in this paper
to hold.

Consider the corrector $Qv_T$ from \eqref{eq:idealcorrector} of a
function $v_T$ that has support only in triangle $T$. For $d=1$, it is
well-known (see e.g.\ \cite{HuSa07}) that total element localization
(i.e.\ $Qv_T|_{\partial T} = 0$) is possible by choosing the nodal
interpolation operator to define $\Vf$. However, total element
localization does not seem to be possible in higher dimensions. Thus,
for the sake of this argument, we neglect the influence of $\IIH$ on
$Qv_T|_{\partial T}$ and set $Qv_T|_{\partial T} = g$ to study the
decay of $Qv_T$ outside $T$.  Under this assumption, $Qv_T$ is the
minimizer
\begin{equation*}
  \|A^{1/2} \nabla Qv_T\|_{L^2(\Omega \setminus T)} \le \|A^{1/2} \nabla \vf \|_{L^2(\Omega \setminus T)} \qquad\text{for all } \vf \in \Vf,
\end{equation*}
with boundary conditions on $\partial \Omega$ and
$Qv_T|_{\partial T} = g$.  We see from this minimization problem that
derivatives $\nabla Qv_T|_{\Omega^1}$ in $\Omega^1$ are more heavily
penalized than derivatives $\nabla Qv_T|_{\Omega^\alpha}$ in
$\Omega^\alpha$, if the contrast is high. Thus, classical Cl\'ement-type
interpolation operators $\IIH$ that use (possibly weighted) averages
or local projections over full node patches as node variables, allows
for the possibility to satisfy the requirement $\IIH Qv_T = 0$ (i.e.\
$Qv_T \in \Vf$) by large variations of $Qv_T|_{\Omega^\alpha}$,
keeping $Qv_T|_{\Omega^1}$ fairly constant (or slowly decaying to
satisfy Dirichlet boundary conditions). This effect causes slow decay
through channels where $A$ is large.

The idea behind the geometry induced operator $\IH$ is to dedicate a
set of node variables (called class I nodes below) that do the
averaging or local projections on subdomains of $\Omega^1$ only. Those
subdomains are always connected subsets of $\Omega^1$ in order to
admit a local Poincar\'e-type inequality. Class I nodes forces decay
within $\Omega^1$ since large variations in $\Omega^\alpha$ cannot
help to satisfy $\IH Qv_T = 0$ if some node variables are defined in
terms of values from $\Omega^1$ only. To construct $\IH$, there are
certain conditions on the placement of nodes in relation to the
coefficient. Although some of these conditions can be relaxed, the
basic requirement (which is made precise below) is that all inclusions
of $\Omega^1$ should contain nodes frequently enough. With enough
nodes to dedicate at least one node to all connected inclusions, it is
possible to construct $\IH$.

A similar idea is that of the $A$-weighted projective
quasi-interpolation used in \cite{PeSc16}, in the sense that the
projection integral of the latter operator (which is unconditionally
taken over the full node patch) is weighted by $A$, so that variations
within $\Omega^\alpha$ becomes less significant in the
interpolation. However, instead of defining the operator in terms of
the geometry of $\Omega^1$ within the node patch, there is a
quasi-monotonicity assumption of the coefficient distribution for
contrast independent localization error.

The idea of selecting integration domain for the node variables is
conveyed in this paper by considering a particular operator $\IH$
based on the Scott--Zhang-type node variable. This is, however, only a
choice made in order to perform a concrete error analysis. The main
idea can be applied also for other node variables. As an example of
this, we will (without theoretical analysis) apply the idea also to
the $A$-weighted projective quasi-interpolation operator in the
numerical experiments.

\subsection{Scott--Zhang type node variables}
\label{sec:scottzhang}
We briefly review Scott--Zhang type node variables \cite{ScZh90}. We
use $i$ and $j$ to index basis functions and denote by $\phi_i$ the
nodal basis function of $S_H$ associated with node $z_i$. We
introduce $N_i$ as a Scott--Zhang-type node variable corresponding to
node $z_i$. To each node variable, we associate a domain
$\sigma_i \subset \Omega$ that includes $z_i$. Based on this
domain, we define an $L^2(\sigma_i)$-dual basis $\psi_i$,
satisfying, for all basis functions $\phi_j$ of $S_H$,
\begin{equation}
  \label{eq:psidef}
  \int_{\sigma_i} \psi_i \phi_j = \delta_{ij} = 
  \begin{cases}
    1 & \quad i=j, \\ 
    0 & \quad i\ne j. \\ 
  \end{cases}
\end{equation}
This dual basis is used to define the node variable,
\begin{equation}
  \label{eq:Ndef}  
  N_i(v) = \int_{\sigma_i} \psi_i v.
\end{equation}

\subsection{Geometry and mesh assumptions}
In order to define $\IH$, we impose the following assumptions on the
geometry of $\Omega^1$ and the mesh. This is the formal condition for
frequent enough node placement within $\Omega^1$.
\begin{assumption}[Existence of a covering set of subdomains of
  $\Omega^1$ with diameter and Poincar\'e constants uniformly bounded by
  $H$]
  \label{ass:localpoincare}
  For a fixed $H$, there is a set
  $\{\omega^1_1, \ldots, \omega^1_{\MW}\}$ of $\MW$ open and possibly
  overlapping subdomains of $\Omega^1$, that satisfy the following
  conditions:
  \begin{enumerate}
  \item They cover exactly $\Omega^1$, i.e.\
    $\overline{\bigcup_{i=1}^{\MW} \omega^1_{i}} = \overline{\Omega^1}$.
  \item For each $\omega^1_i$, there is a free node $z_i \in \nodesfreeH$ such that $z_i \in \omega^1_i$.
  \item There is a constant integer $\Cdiam$ independent of $H$, such that  $\omega^1_{i} \subset U_{\Cdiam}(z_i)$.
  \item There is a constant $\Cpoinc$ independent of $H$, such that the following Poincar\'e inequalities hold,
    \begin{equation}
      \label{eq:poincare}
      \inf_{q\in \R} \|v - q\|_{L^2(\omega^1_{i})} \le \Cpoinc H \|\nabla v\|_{L^2(\omega^1_{i})},\qquad i =1,\ldots,\MW.
    \end{equation}
  \end{enumerate}
\end{assumption}
The constant $\Cpoinc$ carries information about the geometry of the
subdomains. Non-chunky subdomains give rise to larger $\Cpoinc$. The
constant $\Cdiam$ relates the diameter of the subdomains to the mesh
size (since we assume quasi-uniform meshes). Next, we illustrate by
two examples the relation between geometry of $\Omega^1$ and the
constants $\Cpoinc$, $\Cdiam$ and the mesh.

\newcommand{\shapebar}{%
  \fill[black] (-0.5,-0.05) rectangle (0.5,0.05);
}

\definecolor{iceberg}{rgb}{0.44, 0.65, 0.82}
\def \redcolor {iceberg}
\newcommand{\shapebarred}{%
  \fill[\redcolor] (-0.25,-0.05) rectangle (0.25,0.05);
}

\newcommand{\shapevbar}{%
  \fill[black] (-0.05,-0.5) rectangle (0.05,0.5);
}

\newcommand{\shapevbarred}{%
  \fill[\redcolor] (-0.05,-0.25) rectangle (0.05,0.25);
}

\newcommand{\shapecircle}{%
  \fill[black] (0,0) circle [radius=0.25];
}

\newcommand{\shapecirclered}{%
  \fill[\redcolor] (0,0) circle [radius=0.25];
}

\newcommand{\shapecross}{%
  \fill[black] (-0.5,-0.05) rectangle (0.5,0.05);
  \fill[black] (-0.05,-0.5) rectangle (0.05,0.5);
}

\newcommand{\shapecrosscircle}{%
  \fill[black] (-0.5,-0.05) rectangle (0.5,0.05);
  \fill[black] (-0.05,-0.5) rectangle (0.05,0.5);
  \fill[black] (0,0) circle [radius=0.25];
}

\newcommand{\shapecrosscirclered}{%
  \fill[\redcolor] (-0.5,-0.05) rectangle (0.5,0.05);
  \fill[\redcolor] (-0.05,-0.5) rectangle (0.05,0.5);
  \fill[\redcolor] (0,0) circle [radius=0.25];
}

\newcommand{\shapeblob}{%
  \fill[domain=0:2*pi,samples=500] plot ({deg(\x)}:{0.1+0.5*(abs(0.4*cos(\x r) + 0.2*sin(8*\x r) + 0.2*sin(3*\x r) + 0.2*cos(5*\x r)))});
}

\begin{example}[The constants $\Cpoinc$ and $\Cdiam$]
  In some applications, the subdomain $\Omega^1$ can be described as a
  union of similar shapes. Figure~\ref{fig:shapes} gives five examples
  of possible shapes scaled to unit diameter and presents bounds on
  the corresponding Poincar\'e constants. The Poincar\'e constants have
  been generously estimated using the results for convex and
  star-shaped domains in \cite[eq. (1.1), (1.2)]{Ve99}. For the
  examples shown, we note that the Poincar\'e constants for the shapes
  (a), (b) and (d) are bounded independently of the fine scale
  parameter $\epsilon$. The bound for shape (c) increases very slowly
  with $\epsilon$ and is reasonably bounded for most practical
  implementations.

  These shapes can be translated and rotated without affecting their
  Poincar\'e constants. Scaling them by a factor $\Cdiam H$ scales their
  Poincar\'e constants similarly. Thus, a union of these shapes
  (translated, rotated and scaled) can be used to construct subdomains
  $\Omega^1$ depicted in Figure~\ref{fig:omega1}.

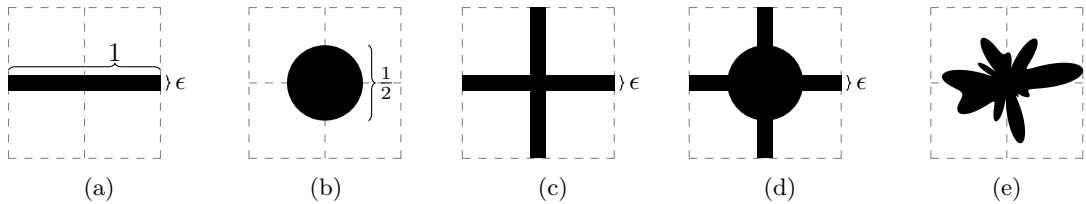
\begin{figure}[]
  \centering
  \begin{subfigure}{.15\textwidth}
    \centering
    \begin{tikzpicture}[scale=2]
      \draw[step=0.5, gray, dashed] (-0.5,-0.5) grid (0.5,0.5);
      \shapebar
      
      \draw[decoration={brace,raise=2pt, amplitude=+1.5pt},decorate]
      (0.5,0.05) -- node[right=2pt] {$\epsilon$} (0.5,-0.05);
      
      \draw[decoration={brace, raise=2pt, aspect=0.7},decorate]
      (-0.5,0.05) -- node[above=2pt,pos=0.7] {$1$} (0.5,0.05);
    \end{tikzpicture}%
    \caption{}
    \label{fig:shape_bar}
  \end{subfigure}
  \hspace{1em} 
  \begin{subfigure}{.15\textwidth}
    \centering
    \begin{tikzpicture}[scale=2]
      \draw[step=0.5, gray, dashed] (-0.5,-0.5) grid (0.5,0.5);
      \shapecircle
      
      \draw[decoration={brace,raise=2pt},decorate]
      (0.25,0.25) -- node[right=2pt] {$\frac{1}{2}$} (0.25,-0.25);
    \end{tikzpicture}%
    \caption{}
    \label{fig:shape_circle}
  \end{subfigure}
  \hspace{1em} 
  \begin{subfigure}{.15\textwidth}
    \centering
    \begin{tikzpicture}[scale=2]
      \draw[step=0.5, gray, dashed] (-0.5,-0.5) grid (0.5,0.5);
      \shapecross
      
      \draw[decoration={brace,raise=2pt, amplitude=+1.5pt},decorate]
      (0.5,0.05) -- node[right=2pt] {$\epsilon$} (0.5,-0.05);
      
    \end{tikzpicture}%
    \caption{}
  \end{subfigure}
  \hspace{1em} 
  \begin{subfigure}{.15\textwidth}
    \centering
    \begin{tikzpicture}[scale=2]
      \draw[step=0.5, gray, dashed] (-0.5,-0.5) grid (0.5,0.5);
      \shapecrosscircle
      
      \draw[decoration={brace,raise=2pt, amplitude=+1.5pt},decorate]
      (0.5,0.05) -- node[right=2pt] {$\epsilon$} (0.5,-0.05);
    \end{tikzpicture}%
    \caption{}
    \label{fig:shape_crosscircle}
  \end{subfigure}
  \hspace{1em} 
  \begin{subfigure}{.15\textwidth}
    \centering
    \begin{tikzpicture}[scale=2]
      \draw[step=0.5, gray, dashed] (-0.5,-0.5) grid (0.5,0.5);

      \shapeblob;

    \end{tikzpicture}%
    \caption{}
  \end{subfigure}

  \caption{Five examples of shapes: (a) convex domain $p \le \pi^{-1}$, (b) convex domain $p \le (2\pi)^{-1}$, (c) star-shaped $p \le \max(5\pi^{-1}, \log(\epsilon^{-1})^{1/2})$, (d) chunky star-shaped $p \le 5\pi^{-1}$, (e) polar function star-shaped, point $(r,\varphi)$ in shape if $r \le f(\varphi)$ with $\epsilon \le f(\varphi) \le 1$, then $p \le \max(5\pi^{-1}, \log(\epsilon^{-1})^{1/2}).$ }
  \label{fig:shapes}

\end{figure}
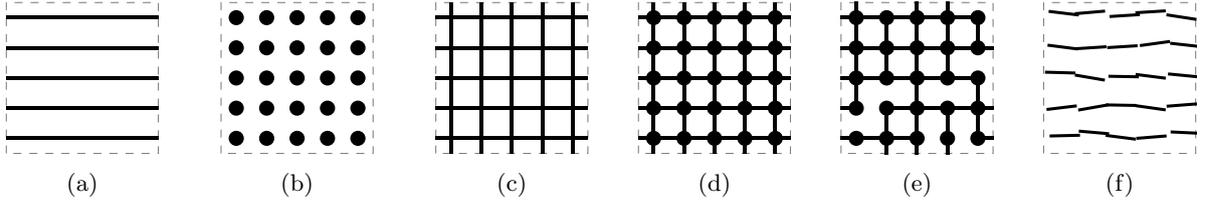
\begin{figure}[]
  \centering
  \begin{subfigure}{.13\textwidth}
    \centering
    \begin{tikzpicture}[scale=0.4]
      \clip (0.48,0.48) rectangle (5.52,5.52);
      \draw[gray, dashed] (0.5,0.5) rectangle (5.5,5.5);
      \foreach \x in {1,2,...,5}
      \foreach \y in {1,2,...,5}
      \begin{scope}[shift={(\x,\y)}]
        \shapebar
      \end{scope};;
    \end{tikzpicture}%
    \caption{}
  \end{subfigure}
  \hspace{1em} 
  \begin{subfigure}{.15\textwidth}
    \centering
    \begin{tikzpicture}[scale=0.4]
    \clip (0.48,0.48) rectangle (5.52,5.52);
    \draw[gray, dashed] (0.5,0.5) rectangle (5.5,5.5);
    \foreach \x in {1,2,...,5}
    \foreach \y in {1,2,...,5}
    \begin{scope}[shift={(\x,\y)}]
      \shapecircle
    \end{scope};;
  \end{tikzpicture}%
    \caption{}
  \end{subfigure}
  \hspace{1em} 
  \begin{subfigure}{.13\textwidth}
    \centering
    \begin{tikzpicture}[scale=0.4]
      \clip (0.48,0.48) rectangle (5.52,5.52);
      \draw[gray, dashed] (0.5,0.5) rectangle (5.5,5.5);
      \foreach \x in {1,2,...,5}
      \foreach \y in {1,2,...,5}
      \begin{scope}[shift={(\x,\y)}]
        \shapecross
      \end{scope};;
    \end{tikzpicture}%
    \caption{}
  \end{subfigure}
  \hspace{1em} 
  \begin{subfigure}{.13\textwidth}
    \centering
    \begin{tikzpicture}[scale=0.4]
      \clip (0.48,0.48) rectangle (5.52,5.52);
      \draw[gray, dashed] (0.5,0.5) rectangle (5.5,5.5);
      \foreach \x in {1,2,...,5}
      \foreach \y in {1,2,...,5}
      \begin{scope}[shift={(\x,\y)}]
        \shapecrosscircle
      \end{scope};;
    \end{tikzpicture}%
    \caption{}
    \label{fig:omega1_crosscircle}
  \end{subfigure}
  \hspace{1em} 
  \begin{subfigure}{.13\textwidth}
    \centering
    \pgfmathsetseed{1}%
    \begin{tikzpicture}[scale=0.4]
      \clip (0.48,0.48) rectangle (5.52,5.52);
      \draw[gray, dashed] (0.5,0.5) rectangle (5.5,5.5);
      \foreach \x in {1,2,...,5} {
        \foreach \y in {1,2,...,5} {
          \begin{scope}[shift={(\x,\y)}]
            \shapecircle;
            \pgfmathparse{rnd}
          \end{scope}
        }
      }
      \foreach \x in {0,1,2,...,5} {
        \foreach \y in {0,1,2,...,5} {
          \begin{scope}[shift={(\x+0.5,\y)}]
            \pgfmathparse{rnd}
            \ifdim \pgfmathresult pt < 0.7pt \shapebar \else {} \fi;
          \end{scope}
        }
      }
      \foreach \x in {0,1,2,...,5} {
        \foreach \y in {0,1,2,...,5} {
          \begin{scope}[shift={(\x,\y+0.5)}]
            \pgfmathparse{rnd}
            \ifdim \pgfmathresult pt < 0.7pt \shapevbar \else {} \fi;
          \end{scope}
        }
      }
    \end{tikzpicture}%
    \caption{}
  \end{subfigure}
  \hspace{1em} 
  \begin{subfigure}{.13\textwidth}
    \centering

    \begin{tikzpicture}[scale=0.4]
      \clip (0.48,0.48) rectangle (5.52,5.52);
      \draw[gray, dashed] (0.5,0.5) rectangle (5.5,5.5);
      \foreach \x in {1,2,...,5}
      \foreach \y in {1,2,...,5}
      \begin{scope}[shift={(\x,\y)}]
        \pgfmathparse{rnd}
        \begin{scope}[shift={(0.2*\pgfmathresult,0)}]
          \pgfmathparse{rnd}
          \begin{scope}[shift={(0,0.2*\pgfmathresult)}]
            \pgfmathparse{rnd}
            \begin{scope}[rotate=20*\pgfmathresult-10]
              \shapebar
            \end{scope}
          \end{scope}
        \end{scope}
      \end{scope};;
    \end{tikzpicture}%
    \caption{}

  \end{subfigure}
  \caption{Six examples of coefficients. $\Omega^1$ is colored black
    and is a union of (possibly overlapping) subdomains like the ones
    presented in Figure~\ref{fig:shapes}. (a--d) Periodic using a
    single shape, (e) non-periodic using several shapes, (f)
    non-periodic with randomly rotated and translated shapes.}
  \label{fig:omega1}
\end{figure}
\end{example}

\begin{example}[Mesh and subdomain matching]
  Consider the coefficient in Figure~\ref{fig:omega1_crosscircle} and
  a mesh with mesh size $H$ with nodes in all circle centers, see
  Figure~\ref{fig:matching_2H}. We can decompose $\Omega^1$ into
  subdomains of the shape presented in
  Figure~\ref{fig:shape_crosscircle} (after translation and
  $H$-scaling). One such subdomain is colored blue in
  Figure~\ref{fig:matching_2H}.  This choice of subdivision and mesh
  fulfills Assumption~\ref{ass:localpoincare}.

  Next, consider the refined mesh in Figure~\ref{fig:matching_H}. The
  decomposition of $\Omega^1$ into subdomains can now be done in a way
  yielding better Poincar\'e constants than for the previous mesh, using
  the shapes in Figure~\ref{fig:shape_bar} and
  \ref{fig:shape_circle}. Three subdomains
  $\omega^1_i$, $\omega^1_j$, and $\omega^1_k$ are colored blue in the figure.

  As the mesh is refined further, it is easy to see that it is
  possible to define subdomains satisfying the two assumptions so that
  their Poincar\'e constants can be bounded by $\Cpoinc$ and their
  diameter by $\Cdiam H$ for some $\Cpoinc$ and $\Cdiam$ independent
  of $H$.
  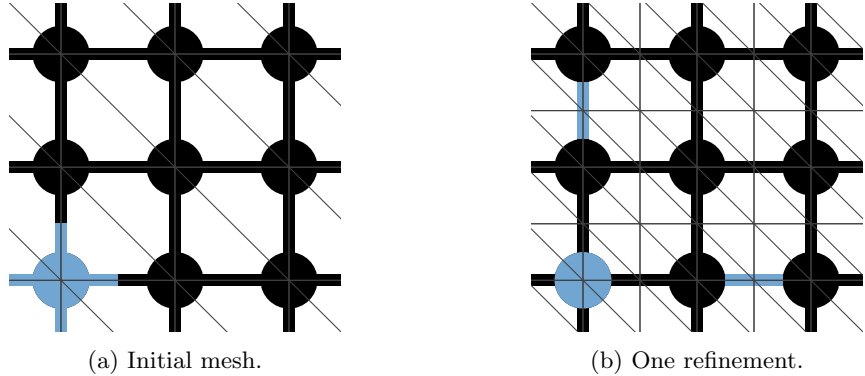
\begin{figure}
    \centering
    \begin{subfigure}{.4\textwidth}
      \centering
      \begin{tikzpicture}[scale=1.5]
        \clip (1.55,1.55) rectangle (4.45,4.45);
        \foreach \x in {2,3,4} {
          \foreach \y in {2,3,4} {
            \begin{scope}[shift={(\x,\y)}]
              \shapecrosscircle
            \end{scope}
          }
        };

        \begin{scope}[shift={(2,2)}]
          \shapecrosscirclered
        \end{scope}

        \foreach \x in {1,2,3,4} {
          \foreach \y in {1,2,3,4} {
            \begin{scope}[shift={(\x,\y)}]
              \draw[\gridcolor] (0,0) grid (1,1);
              \draw[\gridcolor] (0,1) -- (1,0);
            \end{scope}
          }
        }
      \end{tikzpicture}%
      \caption{Initial mesh.}
      \label{fig:matching_2H}
    \end{subfigure}%
    \hspace{1em}
    \begin{subfigure}{.4\textwidth}
      \centering
      \begin{tikzpicture}[scale=1.5]
        \clip (1.55,1.55) rectangle (4.45,4.45);
        \foreach \x in {2,3,4} {
          \foreach \y in {2,3,4} {
            \begin{scope}[shift={(\x,\y)}]
              \shapecrosscircle
            \end{scope}
          }
        };

        \begin{scope}[shift={(2,2)}]
          \shapecirclered
        \end{scope}

        \begin{scope}[shift={(3.5,2)}]
          \shapebarred
        \end{scope}

        \begin{scope}[shift={(2,3.5)}]
          \shapevbarred
        \end{scope}

        \foreach \x in {2,3,...,9} {
          \foreach \y in {2,3,...,9} {
            \begin{scope}[scale=0.5,shift={(\x,\y)}]
              \draw[\gridcolor] (0,0) grid (1,1);
              \draw[\gridcolor] (0,1) -- (1,0);
            \end{scope}
          }
        }
      \end{tikzpicture}%
      \caption{One refinement.}
      \label{fig:matching_H}
    \end{subfigure}
    \caption{Subdomain $\Omega^1$ is black and the triangular mesh is
      dark gray. In (a) one subdomain $\omega^1_i$ is blue and in (b)
      three subdomains $\omega^1_i$, $\omega^1_k$, and $\omega^1_k$
      are blue.}
  \end{figure}

\end{example}

\subsection{Definition of $\IH$}
\label{sec:definition}
The definition of $\IH$ is based on the Scott--Zhang node variables
reviewed above, but with particular choices of integration domains
$\sigma_i$. All free nodes are partitioned into two classes, and the
domain is determined based on the class of the corresponding
node. The class I nodes are dedicated to guarantee fast decay within
$\Omega^1$ and are thus associated with the subdomains $\omega^1_i$
defined above.
\begin{enumerate}[I.]
\item The first class $\nodesIH \subset \nodesfreeH$ is a selection of
  free nodes such that for each domain $\omega^1_i$ there is a
  $z_i \in \nodesIH$ satisfying $z_i \in \omega^1_i$. We number these
  nodes $z_i$ for $i=1,\ldots,M$, although they are not necessarily
  unique.
\item The second class $\nodesIIH = \nodesfreeH \setminus \nodesIH$
  consists of all remaining free nodes.
\end{enumerate}
Further, we define
$\Sigma_\delta(z) = \{\delta(x-z)+z \,:\, x\in U(z)\}$, which is a
node-centered $\delta$-scaling of the node patch of a node $z$, with
$0 < \delta \le 1$. Now,
\begin{equation*}
  \IHdelta v = \sum_{i \,:\, z_i \in \nodesfreeH } N_i(v)\phi_i
\end{equation*}
where $N_i$ is defined in \eqref{eq:Ndef}, and
\begin{equation*}
  \begin{aligned}
    \sigma_i &= U(z_i)\cap\omega^1_{i} && \text{ for } z_i \in \nodesIH, \\
    \sigma_i &= \Sigma_\delta(z_i) && \text{ for } z_i \in \nodesIIH. \\
  \end{aligned}
\end{equation*}
The case $\delta = 1/4$ defines $\IH = \IHdelta$.  See
Figure~\ref{fig:nodeclasses} for an illustration of the two classes
and their integration domain $\sigma_i$. Note that class I node
variables integrate only over $\Omega^1$ since
$\omega^1_i \subset \Omega^1$. The integration domain for class II
nodes are restricted to the $\delta$-scaling of the node patch for
technical reasons which are not fully understood, but appears
necessary for the proof of contrast independent localization error
(see Remark~\ref{rem:classIII} below). An interesting special case is
when $\delta = 1$, i.e.\ that the full node patch is used also for
class II nodes. Choosing $\delta = 1$ defines the operator
$\IHinf$. We study $\IH$ both analytically and numerically and include
$\IHinf$ only in the numerical experiments for comparison.

\begin{remark}
  \label{rem:classIII}
  The theoretical results below holds for a slightly more general
  operator (which will not be discussed outside this remark) than
  $\IH$. Introduce a third class for nodes $z_i$ for which there are
  no channels or inclusions in its node patch, i.e.\
  $U(z_i) \cap \Omega^1 = \emptyset$. For those nodes, it is possible
  to choose integration domain as full node patch,
  $\sigma_i = U(z_i)$, and still have the contrast independent
  localization error results hold. That is, except for class I nodes,
  only nodes close to inclusions or channels actually need the
  integration domain restriction to the $\delta$-scaled node patch. As
  we will see in Section~\ref{sec:num}, the numerical experiments for
  $\IHinf$ even suggest that the restriction to $\delta$-scaled node
  patches is not necessary for any node.
\end{remark}

\newcommand{\patch} {%
  \begin{scope}[shift={(1,2)}]
    \draw[\gridcolor] (0,0) grid (1,1);
  \end{scope}
  \begin{scope}[shift={(2,1)}]
    \draw[\gridcolor] (0,0) grid (1,1);
  \end{scope}
  \foreach \x in {1,2} {
    \foreach \y in {1,2} {
      \begin{scope}[shift={(\x,\y)}]
        \draw[\gridcolor] (0,1) -- (1,0);
      \end{scope}
    }
  }
}

\def \greencolor {DarkGoldenrod1}
\newcommand{\shapebargreen}{%
  \fill[\greencolor] (-0.5,-0.05) rectangle (0.5,0.05);
}

\newcommand{\omegaoneshape} {
  \begin{scope}[xscale=1.2]
    \shapebar;
  \end{scope}
  \begin{scope}[shift={(-0.1,0.1)},rotate=45]
    \shapebar;
  \end{scope}
  \begin{scope}[shift={(0.5,-0.5)}, xscale=1.2]
    \shapebar;
  \end{scope}
}

\newcommand{\omegaoneshapegreen} {
  \begin{scope}[shift={(0.5,-0.5)}, xscale=1.2]
    \shapebar;
  \end{scope}
  \begin{scope}[xscale=1.2]
    \shapebargreen;
  \end{scope}
  \begin{scope}[shift={(-0.1,0.1)},rotate=45]
    \shapebargreen;
  \end{scope}
}

\newcommand{\omegaoneshapetwo} {
  \begin{scope}[shift={(0.5,-0.5)}, xscale=1.2]
    \shapebar;
  \end{scope}
}

\begin{figure}[]
  \centering
  \begin{subfigure}{.3\textwidth}
    \centering
    \begin{tikzpicture}[scale=1.5]
      \clip (0.9, 0.9) rectangle (3.1, 3.1);
      \begin{scope}[scale=2,shift={(1,1)}]
        \omegaoneshape;
        \def \s {0.5};
        \clip (-\s,0) -- (-\s,\s) -- (0,\s) -- (\s,0) -- (\s, -\s) -- (0, -\s) -- (-\s, 0);
        \draw[densely dotted] (-\s,0) -- (-\s,\s) -- (0,\s) -- (\s,0) -- (\s, -\s) -- (0, -\s) -- (-\s, 0);
        \omegaoneshapegreen;
      \end{scope}
      \patch;
    \end{tikzpicture}%
    \caption{Class I ($\nodesIH$).}
    \label{fig:nodeclass_I}
  \end{subfigure}%
  \begin{subfigure}{.3\textwidth}
    \centering
    \begin{tikzpicture}[scale=1.5]
      \clip (0.9, 0.9) rectangle (3.1, 3.1);
      \begin{scope}[scale=2,shift={(1,1)}]
        \omegaoneshapetwo;
      \end{scope}
      \begin{scope}[shift={(2,2)}]
        \def \s {0.25};
        \clip (-\s,0) -- (-\s,\s) -- (0,\s) -- (\s,0) -- (\s, -\s) -- (0, -\s) -- (-\s, 0);
        \draw[densely dotted] (-\s,0) -- (-\s,\s) -- (0,\s) -- (\s,0) -- (\s, -\s) -- (0, -\s) -- (-\s, 0);
        \fill[\greencolor] (-1,-1) rectangle (1,1);
      \end{scope}
      \patch;
    \end{tikzpicture}%
    \caption{Class II ($\nodesIIH$).}
    \label{fig:nodeclasse_II}
  \end{subfigure}
  \caption{Integration domain for two nodes of class I and II,
    respectively. Black is $\Omega^1$. Yellow is $\sigma_i$.}
  \label{fig:nodeclasses}
\end{figure}
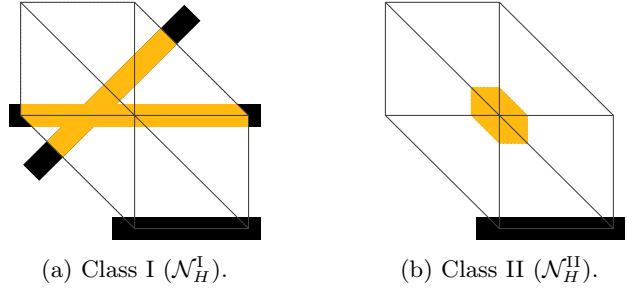

\subsection{Node variable stability}
For class I nodes, the integration domains $\sigma_i$ are restricted to
subsets of $\Omega^1$ and for class II nodes, they are restricted to
the $\delta$-scaling of the node patch. The $L^2$-stability of the
node variables $N_i$ is influenced by the corresponding
$\sigma_i$. Thus, the stability and approximation properties of the
full interpolation operator depends on the choice of $\delta$, the
geometry of $\sigma_i$ and hence also on $\Omega^1$. In this
subsection, we study the $L^2$-stability of $N_i(v)$ with respect to
the integration domains $\sigma_i$. The results in this section hold
also for general Scott--Zhang interpolation operators, i.e.\ for any
choice $\sigma_i \subset U(z_i)$.

For the remainder of this section, we consider a single node variable
$N_i$ and drop the index $i$ for all quantities associated with that
node. Let $n$ be the number of basis functions with support in
$\sigma$ and number those basis functions by $\phi_j$,
$j = 1,\ldots,n$.  Without loss of generality, we assume
$\phi_1(z) = 1$. From the definition of $N$, we note that
\begin{equation*}
|N(v)| \le \|\psi\|_{L^2(\sigma)} \|v\|_{L^2(\sigma)}
\end{equation*}
and focus on $\|\psi\|_{L^2(\sigma)}$.

The next lemma shows how the (normalized) geometry of $\sigma$ affects
the $L^2$-norm of the dual basis. An affine transform $F$ is used to
rescale the physical node patch to a patch of diameter independent of
$H$.
\begin{lemma}[Dual basis $L^2$-norm]
  \label{lem:psil2}
  Let $F(\hat x) = B\hat x + z$ for $B \in \R^{d \times d}$ with $\det(B) = H^d$, i.e.\
  any affine transformation with volume rescaling proportional to the
  volume of $U(z)$. Define
  ${\hat \phi}_j(\hat x) = \phi_j(F(\hat x))$ and
  ${\hat \sigma} = F^{-1}(\sigma)$.  Let
  ${\hat M}_{jk} = \int_{\hat \sigma} {\hat \phi}_k {\hat \phi}_j
  \dd \hat x$,
  for $j,k=1,\ldots,n$ and ${\hat M}^{11}$ be the square submatrix of
  ${\hat M}$ where first row and column have been removed. Then, for the dual
  basis function $\psi$, we have
  \begin{equation}
    \label{eq:kappa}
    \|\psi\|_{L^{2}(\sigma)} = H^{-d/2} \kappa
  \end{equation}
  with $\kappa^2 = \frac{\det({\hat M}^{11})}{\det({\hat M})}$.
\end{lemma}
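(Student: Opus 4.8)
The plan is to reduce the computation of $\|\psi\|_{L^2(\sigma)}$ to a linear-algebra identity for the local mass matrix and then transport everything to the reference patch by the change of variables $F$. First I would make the Scott--Zhang construction of the dual basis explicit: by definition $\psi$ lies in the span of the basis functions meeting $\sigma$, so I write $\psi = \sum_{k=1}^n c_k \phi_k$ on $\sigma$ for an unknown coefficient vector $c = (c_1,\ldots,c_n)^T$. Inserting this into the defining relation \eqref{eq:psidef}, which under the present normalization $\phi_1(z) = 1$ reads $\int_\sigma \psi \phi_j = \delta_{1j}$ for $j = 1,\ldots,n$, turns the duality condition into the linear system $M c = e_1$, where $M_{jk} = \int_\sigma \phi_k \phi_j$ is the local mass matrix and $e_1$ is the first coordinate vector. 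Since the $\phi_j$ are linearly independent on $\sigma$, the Gram matrix $M$ is symmetric positive definite, so the system has the unique solution $c = M^{-1} e_1$; this is also what guarantees that $\psi$, and later $\hat M$, are well defined.

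Next I would compute the norm. Expanding the square and using the symmetry of $M$ together with $Mc = e_1$ gives
\[
  \|\psi\|_{L^2(\sigma)}^2 = \int_\sigma \Big(\sum_k c_k \phi_k\Big)^2 = c^T M c = c^T e_1 = c_1 = (M^{-1})_{11}.
\]
The key step is to rewrite this $(1,1)$ entry of the inverse by the cofactor (Cramer) formula: because $M$ is symmetric the relevant cofactor sign is $(-1)^{1+1} = +1$, so $(M^{-1})_{11} = \det(M^{11})/\det(M)$, where $M^{11}$ is $M$ with its first row and column deleted. Hence $\|\psi\|_{L^2(\sigma)}^2 = \det(M^{11})/\det(M)$, the same ratio as in the statement but formed from the physical mass matrix.

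Finally I would pass to the reference configuration. The substitution $x = F(\hat x) = B \hat x + z$ with $|\det B| = H^d$ yields $M_{jk} = \int_\sigma \phi_k \phi_j \dd x = H^d \int_{\hat\sigma} \hat\phi_k \hat\phi_j \dd \hat x = H^d \hat M_{jk}$, that is $M = H^d \hat M$. Taking determinants of the $n \times n$ matrix $M$ and of the $(n-1)\times(n-1)$ matrix $M^{11}$ extracts the scalar factors $H^{dn}$ and $H^{d(n-1)}$ respectively, so that
\[
  \|\psi\|_{L^2(\sigma)}^2 = \frac{\det(M^{11})}{\det(M)} = H^{-d}\,\frac{\det(\hat M^{11})}{\det(\hat M)} = H^{-d}\kappa^2,
\]
which is the claim after taking square roots. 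The only genuinely delicate point is the bookkeeping of the powers of $H$: because numerator and denominator are determinants of matrices of different sizes ($n-1$ versus $n$), the homogeneity factor $H^d$ does not cancel completely, and one must count it carefully to arrive at the single surviving factor, namely $\|\psi\|_{L^2(\sigma)} = H^{-d/2}\kappa$.
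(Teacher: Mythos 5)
Your proposal is correct and follows essentially the same route as the paper: expand $\psi$ in the local basis, reduce the duality condition to the mass-matrix system $M\xi = e_1$, identify $\|\psi\|_{L^2(\sigma)}^2$ with the first coefficient, and apply Cramer's rule together with the scaling $M = H^d\hat M$. Your version merely spells out the determinant bookkeeping (sizes $n$ versus $n-1$) a bit more explicitly than the paper does.
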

\begin{proof}
  With the assumed node numbering, the definition of $\psi$ is
  \begin{equation}
    \label{eq:psideflocal}
    \int_{\sigma} \psi \phi_j = \delta_{1j}. 
  \end{equation}
  We can express $\psi = \sum_{k=1}^{n} \xi_k \phi_k$. Using the
  definition of $\psi$, we observe that
    $\|\psi\|_{L^2(\sigma)}^2 = \int_{\sigma} \psi \sum_{j=1}^{n}\xi_j \phi_j = \xi_1.$
  With $M_{jk} = \int_{\sigma} \phi_k \phi_j \dd x$ (and
  corresponding definition of $M^{11}$), the linear system of
  equations \eqref{eq:psideflocal} can be expressed as
  $M\cdot\xi = (1, 0, \ldots, 0)$, where
  $\xi = (\xi_1, \ldots, \xi_{d+1})$. By Cramer's rule we get
  \begin{equation*}
    \label{eq:xiest}
    \xi_1 = \frac{\det(M^{11})}{\det(M)} = \det(B)^{-1}\frac{\det({\hat M}^{11})}{\det({\hat M})} .
  \end{equation*}
\end{proof}
The next lemma shows that the stability constant never increases by
extending $\sigma$ within the node patch.
\begin{lemma}[Extending integration domain never increases dual basis $L^2$-norm]
  \label{lem:extend}
  If $\tilde \sigma \supset \sigma$, and $\tilde \psi$ is defined
  analogously to $\psi$, but with $\sigma$ replaced by
  $\tilde \sigma$ (and $n$ by $\tilde n$) then
  \begin{equation*}
    \|\tilde \psi\|_{L^{2}(\tilde \sigma)} \le \|\psi\|_{L^{2}(\sigma)}.
  \end{equation*}
\end{lemma}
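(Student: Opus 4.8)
The plan is to recast the dual-basis norm through a reproducing identity for $\psi$ and then compare $\psi$ and $\tilde\psi$ directly, rather than manipulating the determinant formula of Lemma~\ref{lem:psil2} (whose dimension $n$ changes when $\sigma$ is enlarged). First I would note that the defining relations $\int_\sigma \psi\phi_j = \delta_{1j}$ upgrade to a reproducing property on the entire finite element space: for every $w \in S_H$, written $w = \sum_j \eta_j\phi_j$ in the nodal basis, one has $\int_\sigma \psi w = w(z)$. Indeed, the basis functions whose support meets $\sigma$ contribute $\sum_j \eta_j\delta_{1j} = \eta_1$, whereas every basis function vanishing on $\sigma$ contributes nothing; and since among all nodal basis functions only the one at $z$, namely $\phi_1$, is nonzero at $z$, the coefficient $\eta_1$ equals $w(z)$. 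Taking $w = \psi$ recovers $\|\psi\|_{L^2(\sigma)}^2 = \psi(z)$ (the identity $\|\psi\|^2 = \xi_1$ from the proof of Lemma~\ref{lem:psil2}), and the same holds verbatim for $\tilde\psi$ on $\tilde\sigma$.

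Second, since $\tilde\psi \in S_H$, I would test the reproducing identity for $\psi$ against $w = \tilde\psi$ and assemble the chain
\begin{equation*}
  \|\tilde\psi\|_{L^2(\tilde\sigma)}^2 = \tilde\psi(z) = \int_\sigma \psi\,\tilde\psi \le \|\psi\|_{L^2(\sigma)}\|\tilde\psi\|_{L^2(\sigma)} \le \|\psi\|_{L^2(\sigma)}\|\tilde\psi\|_{L^2(\tilde\sigma)}.
\end{equation*}
Here the first equality is the reproducing property for $\tilde\psi$, the second is the reproducing property for $\psi$ evaluated at $w = \tilde\psi$, the first inequality is Cauchy--Schwarz in $L^2(\sigma)$, and the last inequality uses $\sigma \subset \tilde\sigma$, so that $\|v\|_{L^2(\sigma)} \le \|v\|_{L^2(\tilde\sigma)}$ for any $v$. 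Dividing by $\|\tilde\psi\|_{L^2(\tilde\sigma)}$, which is positive because $\int_{\tilde\sigma}\tilde\psi\phi_1 = 1$, yields the asserted inequality.

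The step I expect to require the most care --- and which is the conceptual crux --- is the mismatch between the function spaces tied to $\sigma$ and to $\tilde\sigma$. Enlarging the integration domain has two competing effects: it increases the $L^2$-mass of each fixed function, which tends to lower the dual norm, but it also enlarges the collection of basis functions meeting the domain, which a priori could raise it; consequently a naive comparison of Rayleigh quotients over the two different spaces does not decide the sign. The resolution is exactly that the reproducing identity for $\psi$ persists on all of $S_H$: because the basis functions newly appearing for $\tilde\sigma$ vanish on $\sigma$, testing $\psi$ against $\tilde\psi$ is legitimate and funnels both effects into the single monotonicity $\|\tilde\psi\|_{L^2(\sigma)} \le \|\tilde\psi\|_{L^2(\tilde\sigma)}$. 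I would finally confirm the standing hypothesis $z \in \sigma$ (each integration domain contains its node), which is what makes $\phi_1$ meet $\sigma$ and the identification $w(z) = \eta_1$ valid.
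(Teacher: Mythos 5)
Your proof is correct and follows essentially the same route as the paper's: the paper writes the key quantity as the coefficient $\tilde\xi_1$ of $\phi_1$ in $\tilde\psi$ and establishes $\|\tilde\psi\|_{L^2(\tilde\sigma)}^2 = \tilde\xi_1 = \int_\sigma \tilde\psi\,\psi$, then applies Cauchy--Schwarz on $\sigma$ and norm monotonicity $\|\tilde\psi\|_{L^2(\sigma)} \le \|\tilde\psi\|_{L^2(\tilde\sigma)}$, exactly your chain with $\tilde\xi_1$ in place of $\tilde\psi(z)$ (the same number, since $\phi_1(z)=1$ and the other basis functions vanish at $z$). Your ``reproducing property'' framing is a harmless repackaging of the same observation that the duality relations for $\psi$ extend to every basis function of $S_H$, because those not meeting $\sigma$ integrate to zero.
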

\begin{proof}
  Let $\psi = \sum_{k=1}^{n} \xi_k \phi_k$ and
  $\tilde \psi = \sum_{k=1}^{\tilde n} \tilde \xi_k \phi_k$ (note that
  $n \le \tilde n$). Then we have that
  $\|\tilde \psi\|_{L^2(\tilde \sigma)}^2 = \tilde \xi_1$, and
  $\int_\sigma \tilde \psi \psi = \tilde \xi_1$. Using this, we get
  \begin{equation*}
    \|\tilde \psi\|_{L^{2}(\tilde \sigma)}^2 = \tilde \xi_1 = \int_\sigma \tilde \psi \psi \le \|\tilde \psi\|_{L^{2}(\sigma)} \|\psi\|_{L^{2}(\sigma)} \le \|\tilde \psi\|_{L^{2}(\tilde \sigma)} \|\psi\|_{L^{2}(\sigma)}.
  \end{equation*}
\end{proof}
The stability constant $\kappa$ hence depends on the shape of
$\hat \sigma$ in a node patch normalized coordinate system. We also
note that $\kappa$ is a computable quantity. Next, we illustrate the
relation between the shape of $\hat \sigma$ and $\kappa$ by studying a
few examples in 2D.

\begin{example}[Constant $\kappa$ for four geometries of $\sigma$ when $d=2$]
  \label{ex:kappa}
  Consider the four geometries of $\sigma$ (restricted to a single
  element and normalized to unit element size) parametrized by
  $\epsilon$ depicted in Figure~\ref{fig:sigmageom}. We are interested
  in the behavior of the stability constant $\kappa$ for small
  $\epsilon$. We compute elements of the matrix $\hat M$ up to high
  enough order of $\epsilon$ for it to be non-singular, and compute
  the lowest order term of $\kappa^2$ in terms of $\epsilon$.
  \begin{enumerate}[(a)]
    \item Here $\hat M = \frac{1}{24}\begin{pmatrix} 2 & 1 & 1 \\ 1 & 2 & 1 \\ 1 & 1 & 2 \\ \end{pmatrix}$ and $\kappa^2 = \det(\hat M^{11})/\det(\hat M) = 18$.
    \item Here $\hat M \approx \frac{\epsilon}{12}\begin{pmatrix} 4 & 2 & 3\epsilon \\ 2 & 4 & 3\epsilon \\ 3\epsilon & 3\epsilon & 4\epsilon^2 \\ \end{pmatrix}$ and $\kappa^2 = \det(\hat M^{11})/\det(\hat M) \approx 7\epsilon^{-1}$ for $\epsilon \ll 1$.
    \item Here $\hat M \approx \frac{\epsilon^2}{24}\begin{pmatrix} 12 & 4\epsilon & 4\epsilon \\ 4\epsilon & 2\epsilon^2 & \epsilon^2 \\ 4\epsilon & \epsilon^2 & 2\epsilon^2 \\ \end{pmatrix}$  and $\kappa^2 = \det(\hat M^{11})/\det(\hat M) = 18\epsilon^{-2}$ for $0 < \epsilon \le 1$.
    \item Here we can reuse $\hat M$ from (b) (since (d) is a linear
      transformation of (b) with preserved volume) and compute
      $\kappa^2 = \det(\hat M^{33})/\det(\hat M) \approx
      12\epsilon^{-3}$ for $\epsilon \ll 1$.
    \item Here $\sigma$ is a superset of $\sigma$ in (c), thus
      according to Lemma~\ref{lem:extend},
      $\kappa^2 \le 18\epsilon^{-2}$.
  \end{enumerate}
  Case (d) above can not occur with the presented definition of $\IH$,
  since $z \not \in \sigma$. Although we do not elaborate on that, the
  condition $z \in \omega^1$ and consequently $z \in \sigma$
  can be relaxed. However, it comes with the cost of larger $\kappa$.
\end{example}
\newcommand{\meshtriangle} {%
  \draw[\gridcolor] (0,0) -- (0,1) -- (1,0) -- (0,0);
}

\newcommand{\meshtrianglegreen} {%
  \fill[\greencolor] (0,0) -- (0,1) -- (1,0) -- (0,0);
}
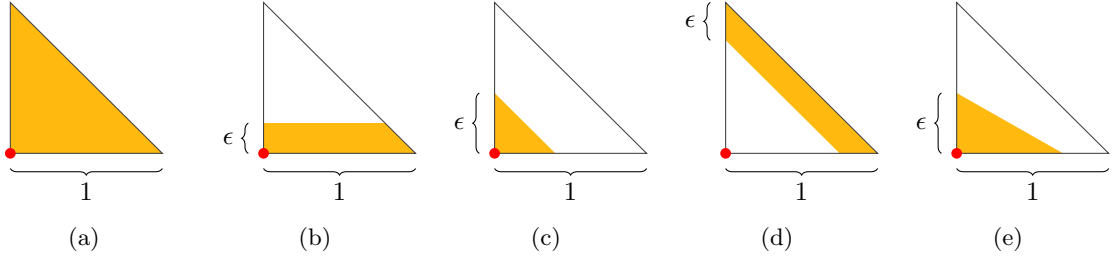
\begin{figure}[]
  \centering
  \begin{subfigure}[t]{.19\textwidth}
    \centering
    \begin{tikzpicture}[scale=2]
      \draw[decoration={brace},decorate]
      (1,-0.1) -- node[below=2pt] {$1$} (0,-0.1);

      \meshtrianglegreen;
      \meshtriangle;
      \fill[red] (0,0) circle (1pt);
    \end{tikzpicture}
    \caption{}
  \end{subfigure}%
  \begin{subfigure}[t]{.19\textwidth}
    \centering
    \begin{tikzpicture}[scale=2]
      \draw[decoration={brace},decorate]
      (1,-0.1) -- node[below=2pt] {$1$} (0,-0.1);

      \draw[decoration={brace},decorate]
      (-0.1,0) -- node[left=2pt] {$\epsilon$} (-0.1,0.2);

      \begin{scope}[scale=1]
        \clip (0, 0) rectangle (1, 0.2);
        \meshtrianglegreen;
      \end{scope}
      \meshtriangle;
      \fill[red] (0,0) circle (1pt);
    \end{tikzpicture}
    \caption{}
  \end{subfigure}%
  \begin{subfigure}[t]{.19\textwidth}
    \centering
    \begin{tikzpicture}[scale=2]
      \draw[decoration={brace},decorate]
      (1,-0.1) -- node[below=2pt] {$1$} (0,-0.1);

      \draw[decoration={brace},decorate]
      (-0.1, 0) -- node[left=2pt] {$\epsilon$} (-0.1,0.4);

      \begin{scope}[scale=0.4]
        \meshtrianglegreen;
      \end{scope}
      \meshtriangle;
      \fill[red] (0,0) circle (1pt);
    \end{tikzpicture}
    \caption{}
  \end{subfigure}%
  \begin{subfigure}[t]{.19\textwidth}
    \centering
    \begin{tikzpicture}[scale=2]
      \draw[decoration={brace},decorate]
      (1,-0.1) -- node[below=2pt] {$1$} (0,-0.1);

      \draw[decoration={brace},decorate]
      (-0.1,0.75) -- node[left=2pt] {$\epsilon$} (-0.1,1);

      \begin{scope}[scale=1]
        \clip (0, 0.75) -- (0.75, 0) -- (1, 0) -- (0, 1) -- (0, 0.75);
        \meshtrianglegreen;
      \end{scope}
      \meshtriangle;
      \fill[red] (0,0) circle (1pt);
    \end{tikzpicture}
    \caption{}
  \end{subfigure}%
  \begin{subfigure}[t]{.19\textwidth}
    \centering
    \begin{tikzpicture}[scale=2]
      \draw[decoration={brace},decorate]
      (1,-0.1) -- node[below=2pt] {$1$} (0,-0.1);

      \draw[decoration={brace},decorate]
      (-0.1, 0) -- node[left=2pt] {$\epsilon$} (-0.1,0.4);

      \begin{scope}[xscale=0.7, yscale=0.4]
        \meshtrianglegreen;
      \end{scope}
      \meshtriangle;
      \fill[red] (0,0) circle (1pt);
    \end{tikzpicture}
    \caption{}
  \end{subfigure}%
  \caption{Red dot is the node $z$. Yellow area is the shape of the
    integration domain $\sigma$.}
  \label{fig:sigmageom}
\end{figure}

Since, for class I nodes, the integration domain $\sigma$ is restricted
to be subsets of a subdomain $\omega^1$, very narrow
subomain $\omega^1$ can cause a large value of the stability constant
$\kappa$ for the node it is paired with. 
For the remainder of this paper, we define an upper bound of the
stability constants
\begin{equation*}
  \kappa = \sup_H \max_{i\,:\, z_i \in \nodesH} \kappa_i.
\end{equation*}

\subsection{Stability and approximability of $\IH$}
We present element stability and approximability results for $\IH$
based on the nodal variable stability above.
\begin{lemma}[Stability and approximability of $\IH$]
  \label{lem:stabappIH}
  Under Assumptions \ref{ass:localpoincare}, for all $v \in V$, it
  holds that
  \begin{align}
    \|v - \IH v\|_{L^{2}(T)} &\le C_{\rho,\kappa} H \|\nabla v\|_{L^2(U(T))} & \text{ for all } T\in\triH, \label{eq:IHL2approx} \\
    \|\nabla (v - \IH v)\|_{L^{2}(T)} &\le C_{\rho,\kappa} \|\nabla v\|_{L^2(U(T))} & \text{ for all } T\in\triH, \label{eq:IHH1stab}
  \end{align}
  where $C_{\rho,\kappa}$ does not depend on $H$.
\end{lemma}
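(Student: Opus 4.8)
The plan is to prove this in the standard Scott--Zhang / Clément fashion: reduce the operator bounds to a local node-variable stability estimate, then use a Bramble--Hilbert / Poincaré argument element by element. The key quantitative input is Lemma~\ref{lem:psil2}, which gives $\|\psi_i\|_{L^2(\sigma_i)} = H^{-d/2}\kappa_i \le H^{-d/2}\kappa$, and hence a contrast-free bound on each node functional.

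\begin{proof}[Proof sketch]
Fix $T\in\triH$. Since $\IH$ is a projection onto $V_H$ (it reproduces affine functions, because each $N_i$ satisfies $N_i(\phi_j)=\delta_{ij}$ and the $\phi_j$ form a partition of unity), the idea is to subtract a well-chosen constant. First I would write, for an arbitrary $q\in\R$,
\begin{equation*}
  v - \IH v = (v - q) - \IH(v - q) \quad\text{on } T,
\end{equation*}
which is valid since $\IH$ reproduces the constant $q$. On $T$ the function $\IH(v-q)$ equals $\sum_{j} N_j(v-q)\phi_j$ summed over the (boundedly many, by shape regularity) nodes $z_j$ whose basis functions are supported on $T$.

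The central step is the node-variable bound. For each such $z_j$, using $|N_j(w)|\le\|\psi_j\|_{L^2(\sigma_j)}\|w\|_{L^2(\sigma_j)}$ together with \eqref{eq:kappa}, I obtain
\begin{equation*}
  |N_j(v-q)| \le H^{-d/2}\kappa\,\|v-q\|_{L^2(\sigma_j)} \le H^{-d/2}\kappa\,\|v-q\|_{L^2(U(T))},
\end{equation*}
where in the last inequality I used $\sigma_j\subset U(z_j)\subset U(U(T))$; to keep the neighborhood at $U(T)$ rather than a two-layer patch I would instead bound $\|v-q\|_{L^2(\sigma_j)}$ on the relevant piece and absorb the fixed number of layers into the shape-regularity constant $C_\rho$ (this is a routine bookkeeping point). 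Combining this with $\|\phi_j\|_{L^2(T)}\le C_\rho H^{d/2}$ and $\|\nabla\phi_j\|_{L^2(T)}\le C_\rho H^{d/2-1}$ (standard nodal-basis estimates on a quasi-uniform mesh) gives, after summing over the $O(1)$ nodes,
\begin{align*}
  \|\IH(v-q)\|_{L^2(T)} &\le C_{\rho,\kappa}\,\|v-q\|_{L^2(U(T))}, \\
  \|\nabla \IH(v-q)\|_{L^2(T)} &\le C_{\rho,\kappa}\,H^{-1}\|v-q\|_{L^2(U(T))}.
\end{align*}

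Finally I would choose $q$ to be the Poincaré-optimal constant for $v$ on $U(T)$ and invoke a Poincaré inequality on the patch $U(T)$, $\inf_{q}\|v-q\|_{L^2(U(T))}\le C_\rho H\|\nabla v\|_{L^2(U(T))}$, which holds on the connected patch of diameter $\lesssim H$ by shape regularity and quasi-uniformity (this is the classical patch Poincaré estimate, independent of Assumption~\ref{ass:localpoincare}). Adding the triangle-inequality term $\|v-q\|_{L^2(T)}$ (for the $L^2$ bound) or $\|\nabla(v-q)\|_{L^2(T)}=\|\nabla v\|_{L^2(T)}$ (for the $H^1$ bound) then yields \eqref{eq:IHL2approx} and \eqref{eq:IHH1stab}. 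The main subtlety—and the only place the construction of $\IH$ really enters—is that the stability constant $\kappa$ must be finite and $H$-independent; this is exactly what is guaranteed by taking $\kappa=\sup_H\max_i\kappa_i$ and by Assumption~\ref{ass:localpoincare}, which ensures the class I domains $\omega^1_i$ (and hence the $\sigma_i$) are not degenerately thin in a way that would blow up $\kappa$. I expect the neighborhood/layer bookkeeping (ensuring the right-hand side lives on $U(T)$ and not a larger patch) to be the fiddliest part, while the contrast-independence is immediate once $\kappa$ is controlled, since none of the constants above involve $\alpha$.
\end{proof}
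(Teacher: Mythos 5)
Your proposal is correct and follows essentially the same route as the paper: a local stability bound for $\IH$ on $T$ via the node-variable estimate $|N_i(w)|\le\|\psi_i\|_{L^2(\sigma_i)}\|w\|_{L^2(\sigma_i)}$ and Lemma~\ref{lem:psil2}, followed by subtracting a constant (using that $\IH$ reproduces constants) and applying the Bramble--Hilbert/Poincar\'e inequality on the patch $U(T)$. The bookkeeping worry you flag is in fact harmless, since the only nodes contributing on $T$ are its vertices, whose node patches (and hence $\sigma_i$) already lie in $U(T)$.
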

\begin{proof}
  The proof follows closely the proofs in \cite{ScZh90}. Let
  $D = \{i \,:\, z_i \in \overline T \}$ be the index set of the
  vertices in $T$.
\begin{equation}
  \label{eq:IHstability}
  \begin{aligned}
    |\IH v|_{H^m(T)} & = \left|\sum_{i \in D} N_i(v) \phi_i\right|_{H^m(T)} \le C_\rho H^{-m+d/2} \sum_{i\in D} |N_i(v)| \\
    & \le C_\rho H^{-m+d/2} \sum_{i\in D} \|\psi_i\|_{L^2(\sigma_i)} \|v\|_{L^2(\sigma_i)} \stackrel{\eqref{eq:kappa}}{\le} C_{\rho} H^{-m} \max_{i\in D}\kappa_i \|v\|_{L^2(U(T))}.
  \end{aligned}
\end{equation}
To prove \eqref{eq:IHL2approx}, we use that for any $c \in \R$
\begin{equation*}
  \begin{aligned}
    \|v - \IH v\|_{L^{2}(T)} & \le \|v - c\|_{L^{2}(T)} + \|\IH(v - c)\|_{L^{2}(T)} \\
    & \stackrel{\eqref{eq:IHstability}}{\le} (1 + C_{\rho} \max_{i\in D}\kappa_i)\|v - c\|_{L^{2}(U(T))} \\
    & \le C_{\rho}(1 + \max_{i\in D}\kappa_i) H \|\nabla v \|_{L^{2}(U(T))}.\\
\end{aligned}
\end{equation*}
Bramble--Hilbert lemma was used in the last step for the element patch
$U(T)$. A similar argument is used to prove \eqref{eq:IHH1stab}.
\end{proof}

 \newcommand{\grid}[1][1] {%
   \FPupn\start{#1 1 +};
   \foreach \x in {-\start,...,#1} { \foreach \y in {-\start,...,#1} {
       \begin{scope}[shift={(\x,\y)}]
         \draw[\gridcolor] (0,0) grid (1,1);
         \draw[\gridcolor] (0,1) -- (1,0);
       \end{scope}
     }
   }
 }

\newcommand{\omegaoneshapethree} {%
  \fill[black] (-0.05, -1.7) rectangle (0.05, 1.7);
  \fill[black] (0.3, -1.7) rectangle (0.4, 1.7);
  \fill[black] (-0.4, -1.7) rectangle (-0.3, 1.7);
  \fill[black] (-0.6, 0.7) rectangle (0.6, 1.3);
  \fill[black] (-0.6, -0.3) rectangle (0.6, 0.3);
  \fill[black] (-0.6, -1.3) rectangle (0.6, -0.7);
}

\newcommand{\omegaoneshapethreered} {%
  \fill[\redcolor] (-0.05, -1.7) rectangle (0.05, 1.7);
  \fill[\redcolor] (0.3, -1.7) rectangle (0.4, 1.7);
  \fill[\redcolor] (-0.4, -1.7) rectangle (-0.3, 1.7);
  \fill[\redcolor] (-0.6, 0.7) rectangle (0.6, 1.3);
  \fill[\redcolor] (-0.6, -0.3) rectangle (0.6, 0.3);
  \fill[\redcolor] (-0.6, -1.3) rectangle (0.6, -0.7);
}

\newcommand{\omegaoneshapefour} {%
  \fill[black] (-0.05, -1.7) rectangle (0.05, 1.7);
}

\newcommand{\omegaoneshapefourred} {%
  \fill[\redcolor] (-0.05, -1.7) rectangle (0.05, 1.7);
}

\newcommand{\omegaoneshapefive} {%
  \begin{scope}[rotate=33.69]
    \fill[black] (-0.05, -1.7) rectangle (0.05, 1.7);
  \end{scope}
}

\newcommand{\omegaoneshapefivered} {%
  \begin{scope}[rotate=33.69]
    \fill[\redcolor] (-0.05, -1.7) rectangle (0.05, 1.7);
  \end{scope}
}

\begin{figure}[]
  \centering
  \begin{subfigure}[t]{.23\textwidth}
    \centering
    \begin{tikzpicture}[scale=1.5]
      \draw[decoration={brace},decorate]
      (-0.6,1.2) -- node[above=2pt] {$1$} (0.6,1.2);

      \clip (-1.1, -1.1) rectangle (1.1, 1.1);

      \omegaoneshapethree;
      \begin{scope}
        \clip (-1, -0.5) rectangle (1, 0.5);
        \omegaoneshapethreered;
      \end{scope}
      \begin{scope}
        \grid;
      \end{scope}
    \fill[red] (0,0) circle (1pt);
    \end{tikzpicture}%
    \caption{}
    \label{fig:circuit}
  \end{subfigure}%
  \begin{subfigure}[t]{.23\textwidth}
    \centering
    \begin{tikzpicture}[scale=1.5]
      \draw[decoration={brace, amplitude=+1pt},decorate]
      (-0.05,1.2) -- node[above=2pt] {$\epsilon$} (0.05,1.2);

      \clip (-1.1, -1.1) rectangle (1.1, 1.1);
      \begin{scope}[shift={(0,0.5)}]
        \omegaoneshapethree;
      \end{scope}
      \begin{scope}
        \clip (-1, -0.5) rectangle (1, 0.5);
        \begin{scope}[shift={(0,0.5)}]
          \omegaoneshapethreered;
        \end{scope}
      \end{scope}
      \begin{scope}
        \grid;
      \end{scope}
    \fill[red] (0,0) circle (1pt);
    \end{tikzpicture}%
    \caption{}
    \label{fig:circuit_displaced}
  \end{subfigure}%
  \begin{subfigure}[t]{.23\textwidth}
    \centering
    \begin{tikzpicture}[scale=1.5]
      \draw[decoration={brace, amplitude=+1pt},decorate]
      (-0.05,1.2) -- node[above=2pt] {$\epsilon$} (0.05,1.2);

      \clip (-1.1, -1.1) rectangle (1.1, 1.1);
      \begin{scope}[shift={(0,0.5)}]
        \omegaoneshapefour;
      \end{scope}
      \begin{scope}
        \clip (-1, -0.5) rectangle (1, 0.5);
        \begin{scope}[shift={(0,0.5)}]
          \omegaoneshapefourred;
        \end{scope}
      \end{scope}
      \begin{scope}
        \grid;
      \end{scope}
      \fill[red] (0,0) circle (1pt);
    \end{tikzpicture}%
    \caption{}
  \end{subfigure}%
  \begin{subfigure}[t]{.23\textwidth}
    \centering
    \begin{tikzpicture}[scale=1.5]
      \clip (-1.1, -1.1) rectangle (1.1, 1.1);
      \begin{scope}
        \omegaoneshapefive;
      \end{scope}
      \begin{scope}
        \clip (-1, -0.375) rectangle (1, 0.375);
        \begin{scope}
          \omegaoneshapefivered;
        \end{scope}
      \end{scope}
      \begin{scope}[scale=0.25]
        \grid[4];
      \end{scope}
      \fill[red] (0,0) circle (1pt);
    \end{tikzpicture}%
    \caption{}
  \end{subfigure}%
  \caption{Illustrations for Example~\ref{ex:influence}. Black
    background is $\Omega^1$. Red dot is $z_i$. Blue area is
    $\omega^1_i$.}
  \label{fig:poincarecoefficient}
\end{figure}
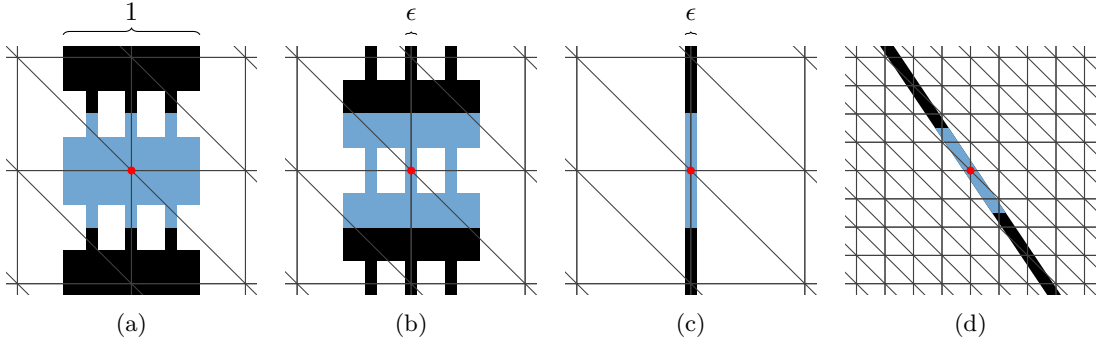
The following example illustrates how node placements affects the
stability constants $\Cdiam$, $p$ and $\kappa$.
\begin{example}[The influence of node placement on stability constants]
  \label{ex:influence}
  Consider the coefficients and node placement options (a--d) given in
  Figure~\ref{fig:poincarecoefficient}. We discuss the values of the
  constants based on the four options. We use \cite[eq. (1.1),
  (1.2)]{Ve99} and \cite[Remark 7.2]{DuSc80} to estimate upper bounds
  of the Poincar\'e constant $\Cpoinc$, and the cases in
  Example~\ref{ex:kappa} to estimate $\kappa$. We use the notation
  $A \lesssim B$ if $A \le CB$ for a constant $C$ independent of
  $\epsilon$.
  \begin{enumerate}[(a)]
  \item Here $\omega^1_i$ is a union of three star-shaped domains with
    intersection of minimum width independent of $\epsilon$. Thus
    $\Cpoinc$ can be bounded from above independent of
    $\epsilon$. Node variable stability constant $\kappa$ is benign
    (and independent of $\epsilon$) since $\sigma_i$ covers a large
    and node centered area around the node. The subdomain is within
    the node patch, why $\Cdiam = 1$.

  \item Here $\omega^1_i$ is a union of three star-shaped domains with
    intersection of minimum width $\epsilon$, hence
    $\Cpoinc \lesssim \epsilon^{-1}\log(\epsilon^{-1})^{1/2}$, which
    increases when $\epsilon$ decreases. Node variable stability
    constant $\kappa$ is larger than case (a), but can be uniformly
    bounded independent of $\epsilon$. Again, $\Cdiam = 1$.

  \item Here $\omega^1_i$ is convex and $\Cpoinc \le \pi^{-1}$, independent
    of $\epsilon$. Node variable stability is
    $\kappa \lesssim \epsilon^{-1/2}$ and $\Cdiam = 1$.

  \item Here the extent of $\omega^1_i$ is larger than the node patch, and $\Cdiam = 2$.
  \end{enumerate}
\end{example}

\section{Contrast independent error bounds for a multiscale method based on $\IH$}
\label{sec:error}
This section presents error analysis of the localized methods based on
$\IH$. Throughout this section, we fix $\IH$ as the choice of
interpolation operator in the multiscale method and thus have
$\Vf = \ker(\IH)$. The main result is that this yields error bounds
where the exponential decay rate of the localization error is
independent of the contrast $\alpha$.

The section is divided into three subsections. The first subsection
presents a few bounds for functions in $\Vf = \ker(\IH)$. These
constitute the key components for the proofs on contrast independent
localization error, given in the second subsection. The error bounds
for the full multiscale method are presented in the last and third
subsection.

\subsection{Bounds for functions in $\ker(\IH)$}
We briefly discuss what is different between the contrast independent
localization error proof in this paper and the contrast dependent
classical proof in e.g.\ \cite{MaPe14}. The classical proof make use
of inequalities of the kind
\begin{equation*}
  \|A^{1/2}\vf\|_{L^2(T)} \le \|\vf - \IH \vf\|_{L^2(T)} \le C\alpha^{-1/2} H \|A^{1/2}\nabla \vf\|_{L^2(U(T))},
\end{equation*}
where the contrast enters the bound. The results in this paper avoid
this by using that $A$ is two-valued and splitting the integral
separately over $T \cap \Omega^1$ and $T \cap \Omega^\alpha$.
The node placement within $\Omega^1$ (guaranteed by
Assumption~\ref{ass:localpoincare}) and the definition of $\IH$ yields
a Poincar\'e-type inequality over $\Omega^1$ independent of $\alpha$ for
functions in $\ker(\IH)$ as presented in \eqref{eq:IHVfapprox} in
Lemma~\ref{lem:poincareIH} below. The norm on the right hand side in
the estimate is taken over a larger domain than that on the left hand
side. However, this spreading is restricted to $\Omega^1$, which keeps
the separation of the initial integral to $\Omega^1$. For the integral
over $\Omega^\alpha$, spreading is not a problem since we can carry
the constant $\alpha^{1/2}$ to cancel the constant $\alpha^{-1/2}$
arising from the classical estimates.
\begin{lemma}[Poincar\'e-type inequalities for functions in kernel to
  $\IH$]
  \label{lem:poincareIH}
  Under Assumptions \ref{ass:localpoincare} and for all
  $\vf \in \Vf = \ker(\IH)$, it holds that
  \begin{align}
    \|\vf\|_{L^{2}(\Omega^1 \cap \omega)} &\le C_{\rho,\Cpoinc,\kappa,\Cdiam} H \|\nabla \vf\|_{L^2(\Omega^1 \cap U_\Cdiam(\omega))} & \text{ for all } \omega\subset\Omega, \label{eq:IHVfapprox} \\
    \|A^{1/2} \vf\|_{L^2(T)} & \le C_{\rho,\Cpoinc,\kappa,\Cdiam} H\|A^{1/2}\nabla \vf \|_{L^2(U_{\Cdiam}(T))} & \text{ for all } T\in\triH, \label{eq:IHVfEnergyapprox}
  \end{align}
  where $C_{\rho,\Cpoinc,\kappa,\Cdiam}$ does not depend on $\alpha$ or $H$.
\end{lemma}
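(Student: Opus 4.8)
The plan is to reduce everything to the first estimate \eqref{eq:IHVfapprox}, which is the genuinely new ingredient; the weighted estimate \eqref{eq:IHVfEnergyapprox} then follows by splitting the integral over the two phases. To prove \eqref{eq:IHVfapprox} I would first establish a \emph{local} version on each subdomain $\omega^1_i$ from Assumption~\ref{ass:localpoincare}, namely $\|\vf\|_{L^2(\omega^1_i)} \le C_{\rho,\Cpoinc,\kappa,\Cdiam} H \|\nabla\vf\|_{L^2(\omega^1_i)}$, and then sum these over a covering of $\Omega^1\cap\omega$. For the local estimate, let $q_i\in\R$ be the best constant in the Poincar\'e inequality \eqref{eq:poincare}, so that $\|\vf - q_i\|_{L^2(\omega^1_i)} \le \Cpoinc H\|\nabla\vf\|_{L^2(\omega^1_i)}$. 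The point is to control $q_i$ using that $z_i\in\nodesIH$ and hence $N_i(\vf)=0$ for $\vf\in\ker(\IH)$. Since $\sum_j\phi_j\equiv 1$ and $\psi_i$ is $L^2(\sigma_i)$-dual to the $\phi_j$ by \eqref{eq:psidef}, the node variable reproduces constants, $N_i(q_i)=q_i$, so $q_i = N_i(q_i-\vf)$. Because $\sigma_i = U(z_i)\cap\omega^1_i \subseteq \omega^1_i$, the Cauchy--Schwarz bound for $N_i$ defined in \eqref{eq:Ndef} together with \eqref{eq:kappa} gives $|q_i| = |N_i(\vf-q_i)| \le \|\psi_i\|_{L^2(\sigma_i)}\|\vf - q_i\|_{L^2(\omega^1_i)} \le H^{-d/2}\kappa\,\Cpoinc H\|\nabla\vf\|_{L^2(\omega^1_i)}$. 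Multiplying by $|\omega^1_i|^{1/2}\le C_{\rho,\Cdiam}H^{d/2}$ (quasi-uniformity and $\omega^1_i\subset U_\Cdiam(z_i)$) bounds $\|q_i\|_{L^2(\omega^1_i)}$, and a triangle inequality yields the local estimate. This is the step where the construction of $\IH$ is essential: it is the restriction $\sigma_i\subseteq\Omega^1$ that lets me use the \emph{same} Poincar\'e domain $\omega^1_i$ on both sides and thereby avoid any factor of $\alpha$.

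To globalize, I would use that the $\omega^1_i$ cover $\Omega^1$, so $\|\vf\|_{L^2(\Omega^1\cap\omega)}^2 \le \sum_{i\,:\,\omega^1_i\cap\omega\neq\emptyset}\|\vf\|_{L^2(\omega^1_i)}^2$ by subadditivity, and apply the local estimate termwise. On the right-hand side I would reassemble the resulting sum as $\sum_i\|\nabla\vf\|_{L^2(\omega^1_i)}^2 = \int (\sum_i\mathbf{1}_{\omega^1_i})|\nabla\vf|^2$ and use a finite-overlap bound $\sum_i\mathbf{1}_{\omega^1_i}\le C_{\rho,\Cdiam}$, which follows from $\omega^1_i\subset U_\Cdiam(z_i)$ with $z_i\in\omega^1_i$ and quasi-uniformity (only boundedly many subdomains can meet a given point). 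Since each $\omega^1_i$ meeting $\omega$ has diameter $\lesssim\Cdiam H$ and lies in $\Omega^1$, it is contained in $\Omega^1\cap U_\Cdiam(\omega)$ (up to replacing $\Cdiam$ by a fixed multiple, which only affects the constant), which gives \eqref{eq:IHVfapprox}.

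Finally, for \eqref{eq:IHVfEnergyapprox} I would split $\|A^{1/2}\vf\|_{L^2(T)}^2 = \|\vf\|_{L^2(T\cap\Omega^1)}^2 + \alpha\|\vf\|_{L^2(T\cap\Omega^\alpha)}^2$. On $\Omega^1$, where $A=1$, the first term is handled directly by \eqref{eq:IHVfapprox} with $\omega=T$, using $\int_{\Omega^1\cap U_\Cdiam(T)}|\nabla\vf|^2 \le \|A^{1/2}\nabla\vf\|_{L^2(U_\Cdiam(T))}^2$. For the second term I would use $\IH\vf=0$ and the unweighted estimate \eqref{eq:IHL2approx} of Lemma~\ref{lem:stabappIH} to get $\alpha\|\vf\|_{L^2(T)}^2 \le \alpha\, C_{\rho,\kappa}^2 H^2\|\nabla\vf\|_{L^2(U(T))}^2$, and then absorb the prefactor via $\alpha\le A$ pointwise, so that $\alpha\int_{U(T)}|\nabla\vf|^2 \le \int_{U(T)}A|\nabla\vf|^2 = \|A^{1/2}\nabla\vf\|_{L^2(U(T))}^2 \le \|A^{1/2}\nabla\vf\|_{L^2(U_\Cdiam(T))}^2$. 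Keeping $\alpha$ as a prefactor throughout, rather than dividing by it as in the classical proof, is exactly what produces a contrast-independent constant.

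The conceptual crux is the control of the local mean value $q_i$, which is what the dedicated class~I node placement buys us; the main technical obstacle is instead the bookkeeping in the globalization step, namely establishing the finite-overlap property of the covering $\{\omega^1_i\}$ (in particular ruling out that a point is covered by unboundedly many subdomains, given that the associated nodes $z_i$ need not be distinct) and matching the patch index $U_\Cdiam$ on the right-hand side. I would also verify carefully that $N_i$ reproduces constants on $\sigma_i$ in all cases, including nodes whose integration domain approaches the boundary, since this is what makes the identity $q_i=N_i(q_i-\vf)$ legitimate.
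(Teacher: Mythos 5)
Your proposal is correct and follows essentially the same route as the paper: the identity $\vf=(\vf-c_i)-N_i(\vf-c_i)$ for the optimal Poincar\'e constant $c_i$ on each $\omega^1_i$, the bound $|N_i(\vf-c_i)|\le H^{-d/2}\kappa_i\|\vf-c_i\|_{L^2(\sigma_i)}$ using $\sigma_i\subset\omega^1_i$, summation over the covering with finite overlap, and for \eqref{eq:IHVfEnergyapprox} the phase splitting with \eqref{eq:IHL2approx} and the pointwise bound $\alpha\le A$ on $\Omega^\alpha$. Your explicit flagging of the finite-overlap and patch-index bookkeeping is a fair observation (the paper absorbs both silently into the constant), but it does not change the argument.
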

\begin{proof}
For \eqref{eq:IHVfapprox}, we use that $N_i(\vf) = 0$ and
$N_i(c) = c$, for any $c \in \R$. Then, using Assumption~\ref{ass:localpoincare} and the node numbering $z_i$ described in
Section~\ref{sec:definition}, choosing $c_i \in \R$ appropriately, we
have for any $\omega \subset \Omega$,
\begin{equation*}
  \begin{aligned}
    \|\vf\|_{L^2(\Omega^1 \cap \omega)}^2 & \le \sum_{i\,:\,\omega^1_{i} \cap \omega \ne \emptyset} \|\vf\|_{L^2(\omega^1_{i})}^2 \\
    & \le 2\sum_{i\,:\,\omega^1_{i} \cap \omega \ne \emptyset} \left(\|\vf - c_i\|_{L^2(\omega^1_{i})}^2 + \|N_i(\vf - c_i)\|_{L^2(\omega^1_{i})}^2\right) \\
    & \le 2\sum_{i\,:\,\omega^1_{i} \cap \omega \ne \emptyset} \left(\|\vf - c_i\|_{L^2(\omega^1_{i})}^2 + C_\rho \Cdiam^{d} \kappa_i^2 \|\vf - c_i\|_{L^2(\sigma_i)}^2\right) \\
    & \stackrel{\eqref{eq:poincare}}{\le} C_\rho \sum_{i\,:\,\omega^1_{i} \cap \omega \ne \emptyset} (1 + \Cdiam^{d} \kappa_i^2 ) \Cpoinc^2 H^2 \|\nabla \vf\|_{L^2(\omega^1_{i})}^2 \\
    & \le C_{\rho, \Cpoinc, \kappa, \Cdiam} H^2 \|\nabla \vf\|_{L^2(\Omega^1 \cap U_\Cdiam(\omega))}^2. \\
  \end{aligned}
\end{equation*}

For \eqref{eq:IHVfEnergyapprox}, we combine the two obtained
results \eqref{eq:IHL2approx} and \eqref{eq:IHVfapprox} to get
  \begin{equation*}
    \begin{aligned}
      \|A^{1/2} \vf\|_{L^2(T)} & \le \alpha^{1/2} \|\vf\|_{L^2(\Omega^\alpha \cap T)} + \|\vf\|_{L^2(\Omega^1 \cap T)} \\
      & \stackrel{\eqref{eq:IHVfapprox}}{\le} \alpha^{1/2} \|\vf -\IH \vf \|_{L^2(T)} + C_{\rho,\Cpoinc,\kappa,\Cdiam}H\|\nabla \vf \|_{L^2(\Omega^1 \cap U_\Cdiam(T))}  \\
      & \stackrel{\eqref{eq:IHL2approx}}{\le} \alpha^{1/2} C_{\rho,\kappa} H \|\nabla \vf \|_{L^2(U(T))} + C_{\rho,\Cpoinc,\kappa,\Cdiam} H \|\nabla \vf \|_{L^2(\Omega^1 \cap U_\Cdiam(T))}\\
      & \le C_{\rho,\Cpoinc,\kappa,\Cdiam}H\|A^{1/2}\nabla \vf \|_{L^2(U_{\Cdiam}(T))}. \\
    \end{aligned}
  \end{equation*}
\end{proof}

We define a slightly different cut-off function compared to previous
works, which is constant on $\Sigma_\delta(z_i)$ for all nodes
$z_i$. For the remainder of the paper we will set $\delta=1/4$
as in the definition of $\IH$ and drop the $\delta$-subscript. The
operator $\IHinf$ for which $\delta = 1$ will not be analyzed and
appears again in the numerical experiment section. We introduce, for
$T \in \triH$ and $k > 0$, the auxiliary function
${\hat \eta}_{T,k} \in S_H$,
\begin{equation}\label{e:cutoffH}
\begin{aligned}
 {\hat \eta}_{T,k}(x) &= 0\quad\text{for } x \in U_{k-1}(T),\\
 {\hat \eta}_{T,k}(x) &= 1\quad\text{for } x \in \Omega\setminus U_k(T),\\
\end{aligned}
\end{equation}
and define the cut-off function as
$\eta_{T,k} = \max(0, \min(1, 2{\hat \eta}_{T,k} - 1/2))$.
We get 
$\|\nabla \eta_{T,k}\|_{L^{\infty}(\Omega)} \le 2C_\rho H^{-1}$.
Figure~\ref{fig:cutoff} shows an illustration of such a cut-off
function.
\begin{figure}[]
  \centering
  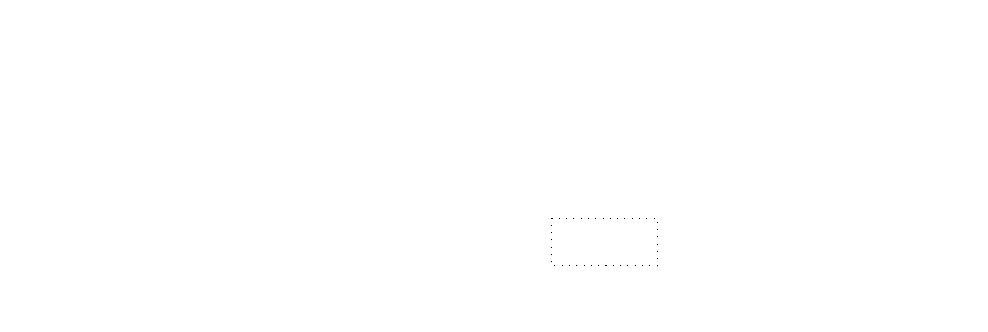
  \caption{A 2D cut-off function illustrated.}
  \label{fig:cutoff}
\end{figure}

With the definition of the cut-off functions at hand, we can study a
second term which appears in the classical proof and where contrast
enters the bound, namely
\begin{equation*}
  \|A^{1/2}\nabla \IH(\eta \vf)\|_{L^2(T)} \le C \|\nabla (\eta \vf)\|_{L^2(U(T))} \le C\alpha^{-1/2} \|A^{1/2}\nabla \vf\|_{L^2(U_2(T))},
\end{equation*}
for a cut-off function $\eta$. With $\IH$, this term can also bounded
independently of $\alpha$ using similar techniques as in the previous lemma.
\begin{lemma}[Local contrast independent energy norm stability of
  \fine{} space functions after cut-off and interpolation]
  \label{lem:aux1}
  If $\eta \in L^{\infty}(\Omega)$ is constant in $\Sigma(z_i)$
  for all nodes $z_i$ and $\|\eta\|_{L^{\infty}(\Omega)} \le 1$, then
  for any $\vf \in \Vf$ and $\omega \subset \Omega$,
  \begin{equation}
    \label{eq:aux1}
    \|A^{1/2} \nabla (\IH(\eta \vf))\|_{L^2(\omega)} \le C_{\rho, \Cpoinc, \kappa, \Cdiam}\|A^{1/2} \nabla \vf \|_{L^2(U_{\Cdiam+2}(\omega))},
  \end{equation}
  where $C_{\rho, \Cpoinc, \kappa, \Cdiam}$ does not depend on $\alpha$ or $H$.
\end{lemma}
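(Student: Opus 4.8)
The plan is to estimate the energy norm element by element, exploiting that $\IH(\eta\vf)\in S_H$, so that $\nabla\IH(\eta\vf)$ is constant on every $T\in\triH$. Writing $\IH(\eta\vf)=\sum_{i\,:\,z_i\in\nodesfreeH}N_i(\eta\vf)\phi_i$, the decisive first step is to observe that the class II node variables drop out: for $z_i\in\nodesIIH$ we have $\sigma_i=\Sigma(z_i)$, on which $\eta$ is constant by hypothesis, so $N_i(\eta\vf)=\eta|_{\sigma_i}\,N_i(\vf)=0$ because $\vf\in\ker(\IH)$. Hence only the class I nodes, whose integration domains $\sigma_i=U(z_i)\cap\omega^1_i$ lie entirely in $\Omega^1$, contribute to $\IH(\eta\vf)$. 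This is exactly where the geometry-induced construction pays off and where the contrast would otherwise enter.

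On a fixed $T$ with vertex index set $D$, I would bound the constant gradient by $\|\nabla\IH(\eta\vf)\|_{L^\infty(T)}\le C_\rho H^{-1}\sum_{i\in D\cap\nodesIH}|N_i(\eta\vf)|$, using $\|\nabla\phi_i\|_{L^\infty(T)}\le C_\rho H^{-1}$ exactly as in \eqref{eq:IHstability}. For each surviving class I node I estimate $|N_i(\eta\vf)|\le\|\psi_i\|_{L^2(\sigma_i)}\|\eta\|_{L^\infty}\|\vf\|_{L^2(\sigma_i)}\le H^{-d/2}\kappa_i\|\vf\|_{L^2(\Omega^1\cap U(z_i))}$, invoking \eqref{eq:kappa}, the bound $\|\eta\|_{L^\infty}\le 1$, and crucially that $\sigma_i\subset\Omega^1$. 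Since the norm of $\vf$ on the right is taken purely over $\Omega^1$, the contrast-independent Poincar\'e inequality \eqref{eq:IHVfapprox} of Lemma~\ref{lem:poincareIH} applies and gives $\|\vf\|_{L^2(\Omega^1\cap U(z_i))}\le C_{\rho,\Cpoinc,\kappa,\Cdiam}H\|\nabla\vf\|_{L^2(\Omega^1\cap U_{\Cdiam+1}(z_i))}$.

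To pass to the energy norm without a factor $\alpha^{-1/2}$, I use that $\nabla\IH(\eta\vf)$ is constant on $T$ together with $\alpha\le 1$, so $\|A^{1/2}\nabla\IH(\eta\vf)\|_{L^2(T)}^2=\|\nabla\IH(\eta\vf)\|_{L^\infty(T)}^2\,(|T\cap\Omega^1|+\alpha|T\cap\Omega^\alpha|)\le C_\rho H^d\|\nabla\IH(\eta\vf)\|_{L^\infty(T)}^2$. Combining with the previous step and using $A\equiv 1$ on $\Omega^1$ turns $\|\nabla\vf\|_{L^2(\Omega^1\cap\cdots)}$ into $\|A^{1/2}\nabla\vf\|_{L^2(\cdots)}$ at no cost, yielding $\|A^{1/2}\nabla\IH(\eta\vf)\|_{L^2(T)}^2\le C_{\rho,\Cpoinc,\kappa,\Cdiam}\sum_{i\in D\cap\nodesIH}\|A^{1/2}\nabla\vf\|_{L^2(U_{\Cdiam+1}(z_i))}^2$. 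Finally I would sum over all $T$ meeting $\omega$: each such $T$ lies in $U(\omega)$, so every involved node patch satisfies $U_{\Cdiam+1}(z_i)\subset U_{\Cdiam+2}(\omega)$, and since $\Cdiam$ is a fixed integer the patches overlap with bounded multiplicity $C_{\rho,\Cdiam}$ (shape regularity and quasi-uniformity). This produces the stated bound on $U_{\Cdiam+2}(\omega)$.

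The element-wise stability and the patch bookkeeping are routine, mirroring Lemmas~\ref{lem:stabappIH} and \ref{lem:poincareIH}. The main obstacle, and the whole point, is keeping the estimate contrast free at the one place the classical argument loses it: the variation of the cut-off, which in the usual product-rule estimate $\|\nabla(\eta\vf)\|\lesssim\|\eta\nabla\vf\|+H^{-1}\|\vf\|$ forces one to control $\|\vf\|$ in the energy norm and hence pay a factor $\alpha^{-1/2}$. Here the two hypotheses on $\eta$ are used precisely to avoid this: constancy on each $\Sigma(z_i)$ annihilates all class II contributions, while the remaining class I node variables see $\vf$ only through $\Omega^1$, where the contrast-independent Poincar\'e estimate is available and $A\equiv 1$. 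The care required is therefore conceptual, namely tracking which node variables survive and over which region they integrate, rather than computational.
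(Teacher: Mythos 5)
Your proposal is correct, and it shares the paper's central mechanism: the constancy of $\eta$ on $\Sigma(z_i)$ kills every class II node variable ($N_i(\eta\vf)=\eta|_{\sigma_i}N_i(\vf)=0$ since $\vf\in\ker(\IH)$), the surviving class I variables integrate only over $\sigma_i\subset\Omega^1\cap U(z_i)$ and are controlled via \eqref{eq:kappa} and the contrast-free Poincar\'e inequality \eqref{eq:IHVfapprox}, and the patch bookkeeping lands on $U_{\Cdiam+2}(\omega)$. Where you genuinely differ is in the structure: the paper splits the left-hand norm into a part over $\Omega^1\cap\omega$ (treated exactly as you do) and a part over $\Omega^\alpha\cap\omega$, which it handles by a second, separate mechanism --- the $H^1$-stability \eqref{eq:IHH1stab} applied to $\eta\vf$, the product rule, carrying $\alpha^{1/2}$ through $\alpha\le A$, and the energy Poincar\'e estimate \eqref{eq:IHVfEnergyapprox}. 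You instead observe that the class II annihilation is a global statement about the function $\IH(\eta\vf)=\sum_{i\,:\,z_i\in\nodesIH}N_i(\eta\vf)\phi_i$, so the elementwise bound (constant gradient on each $T$, together with $A\le 1$) covers the whole weighted norm in a single stroke. This buys you three things: the proof is shorter and independent of \eqref{eq:IHH1stab} and \eqref{eq:IHVfEnergyapprox}; it actually proves the stronger estimate $\|\nabla(\IH(\eta\vf))\|_{L^2(\omega)}\le C_{\rho,\Cpoinc,\kappa,\Cdiam}\|A^{1/2}\nabla\vf\|_{L^2(\Omega^1\cap U_{\Cdiam+2}(\omega))}$, of which \eqref{eq:aux1} is a consequence; and, notably, it uses only the hypotheses as stated in the lemma ($\eta\in L^\infty$, $\|\eta\|_{L^\infty}\le 1$, constancy on each $\Sigma(z_i)$), whereas the paper's $\Omega^\alpha$ branch implicitly also uses $\|\nabla\eta\|_{L^\infty(\Omega)}\le C_\rho H^{-1}$ (and $\eta\vf\in V$), i.e.\ properties of the specific cut-off functions \eqref{e:cutoffH} that are not part of the lemma's assumptions. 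The paper's route, in exchange, reuses lemmas already established and keeps the $\Omega^1$/$\Omega^\alpha$ splitting philosophy announced before Lemma~\ref{lem:poincareIH} explicit, but for this particular statement your unified argument is the tighter one.
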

\begin{proof}
  We split the integral into one contribution over $\Omega^1$ and one
  over $\Omega^\alpha$. For $\Omega^1$, 
  \begin{equation*}
    \begin{aligned}
    \|\nabla (\IH(\eta \vf))\|^2_{L^2(\Omega^1 \cap \omega)} & = \int_{\Omega^1 \cap \omega} \left(\sum_{i\,:\,z_i \in \nodesH} N_i(\eta \vf) \nabla \phi_i\right)^2 \\
    & \le C_\rho \sum_{i\,:\,z_i \in \nodesH} \left( \int_{\sigma_i} \psi_i \eta \vf \cdot \int_{\Omega^1 \cap \omega} \nabla \phi_i\right)^2 \cdots \\
    \end{aligned}
  \end{equation*}
  We study the two integrals. The following step justifies the
  partition into the two classes introduced in
  Section~\ref{sec:definition}.  For any node $z_i \in \nodesIIH$,
  $\eta$ is constant in $\sigma_i$ and
  $\int_{\sigma_i} \psi_i \eta \vf = \eta \int_{\sigma_i} \psi_i \vf =
  0$
  since $\vf \in \Vf$.
  This makes the corresponding term in the sum is zero. We define the set
  $\mathcal{N}^1_{\omega} = \{z_i \,:\, z_i \in \nodesIH \text{ and }
  U(z_i)\cap\Omega^1\cap\omega \ne \emptyset\}$,
  which contains all nodes whose corresponding term in the sum is not
  zero in general, and continue using mesh quasi-uniformity,
  Lemma~\ref{lem:psil2}, that $\|\eta\|_{L^{\infty}(\Omega)} \le 1$,
  and $\sigma_i \subset U(z_i)\cap\Omega^1$ for
  $z_i \in \mathcal{N}^1_{\omega}$,
  \begin{equation*}
    \begin{aligned}
    \cdots & \le C_\rho H^{-2-d} \sum_{i\,:\, z_i \in \mathcal{N}^1_{\omega}} \left(\int_{\sigma_i} \psi_i \eta \vf \right)^2 \\
    & \stackrel{\eqref{eq:kappa}}{\le} C_{\rho,\kappa} H^{-2} \sum_{i\,:\, z_i \in \mathcal{N}^1_{\omega}} \|\vf\|^2_{L^2(\sigma_i)} \\
    & \le C_{\rho,\kappa}  H^{-2} \sum_{i\,:\, z_i \in \mathcal{N}^1_{\omega}} \|\vf\|^2_{L^2(\Omega^1 \cap U(z_i))} \\
    & \stackrel{\eqref{eq:IHVfapprox}}{\le} C_{\rho,\Cpoinc,\kappa,\Cdiam}  \sum_{i\,:\, z_i \in \mathcal{N}^1_{\omega}} \|\nabla \vf\|^2_{L^2(\Omega^1 \cap U_{\Cdiam+1}(z_i))} \\
    & \le C_{\rho,\Cpoinc,\kappa,\Cdiam} \|\nabla \vf\|^2_{L^2(\Omega^1 \cap U_{\Cdiam+2}(\omega))} \\
    \end{aligned}
  \end{equation*}
  where we used shape regularity and that
    $\bigcup_{i\,:\, z_i \in \mathcal{N}^1_{\omega}} U_{\Cdiam+1}(z_i)
    \subset U_{\Cdiam+2}(\omega)$
  in the last step.
  For $\Omega^\alpha$, we carry the constant $\alpha \le A$ through
  the steps and ignore spreading to $\Omega^1$,
  \begin{equation*}
    \begin{aligned}
      \alpha^{1/2} \|\nabla (\IH(\eta \vf))\|_{L^2(\Omega^\alpha \cap \omega)} & \stackrel{\eqref{eq:IHH1stab}}{\le} \alpha^{1/2} C_{\rho,\kappa} \|\nabla (\eta \vf)\|_{L^2(U_1(\omega))} \\
      & \le C_{\rho,\kappa}H^{-1}\|A^{1/2} \vf\|_{L^{2}(U_1(\omega))} + C_{\rho,\kappa} \| A^{1/2} \nabla \vf \|_{L^{2}(U_1(\omega))} \\
      & \stackrel{\eqref{eq:IHVfEnergyapprox}}{\le} C_{\rho,\Cpoinc,\kappa,\Cdiam} \|A^{1/2} \nabla \vf\|_{L^{2}(U_{\Cdiam+1}(\omega))}.
    \end{aligned}
  \end{equation*}
  The assertion follows from adding the two contributions.
 \end{proof}

\subsection{Contrast independent localization error bounds}
The proof technique used here follows closely e.g.\ \cite{HMP13b}.
The first lemma shows that non-local correctors exhibit exponential
decay and the second shows that the localization error decays
exponentially with increasing patch size $k$ independent of
$\alpha$. We use the following elementwise decomposition of
$Qv = \sum_{T \in \triH} Q_T v$ and $Rf = \sum_{T \in \triH} R_Tf$,
where, for all $\vf \in \Vf$,
\begin{equation*}
a(Q_Tv, \vf) = \int_T A \nabla v \cdot \nabla \vf \qquad\text{ and }\qquad a(R_Tv, \vf) = \int_T f \vf.
\end{equation*}
With this decomposition, the lemmas below can be applied with
$p^T = Q_Tv$ or $p^T = R_Tv$ for any $T \in \triH$ and $v \in V$.
\begin{lemma}[Contrast independent exponential decay of non-local corrector]
  \label{lem:decay}
  Let $F_T(v) \in V'$ with $F_T(v) = 0$ for $v \in V(\Omega \setminus T)$. Let
  $p^T \in \Vf$ satisfy, for all $v \in \Vf$,
  \begin{equation*}
    \int_\Omega A \nabla p^T \cdot \nabla v = F_T(v).
  \end{equation*}
  Under Assumptions \ref{ass:localpoincare}, it holds for $k \ge \Cdiam+5$,
  \begin{equation}
    \label{eq:decay}
    \|A^{1/2} \nabla p^T \|_{L^2(\Omega \setminus U_k(T))} \le C_{\rho, \Cpoinc, \kappa, \Cdiam}\theta^{k} \| A^{1/2} \nabla p^T \|_{L^2(\Omega)},
  \end{equation}
  where $0 < \theta < 1$ and $C_{\rho, \Cpoinc, \kappa, \Cdiam}$ do not depend on $\alpha$, $k$ or $H.$
\end{lemma}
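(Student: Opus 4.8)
The plan is to run the standard ``cut-off and iterate'' decay argument, but to replace every step in which the contrast $\alpha$ classically enters by the contrast independent estimates \eqref{eq:IHVfEnergyapprox} and \eqref{eq:aux1}. Write $w = p^T$ and abbreviate $b_j = \|A^{1/2}\nabla w\|^2_{L^2(\Omega\setminus U_j(T))}$; note that $b_j$ is nonincreasing in $j$ and $b_0\le\|A^{1/2}\nabla w\|^2_{L^2(\Omega)}$. Fix the cut-off $\eta = \eta_{T,k}$ from \eqref{e:cutoffH}, which satisfies $0\le\eta\le1$, equals $1$ on $\Omega\setminus U_k(T)$, equals $0$ on $U_{k-1}(T)$, has $\|\nabla\eta\|_{L^\infty}\le 2C_\rho H^{-1}$ supported in $U_k(T)\setminus U_{k-1}(T)$, and --- by its construction with the factor $2$ and offset $1/2$ combined with $\delta=1/4$ --- is constant on $\Sigma_{1/4}(z_i)$ for every node $z_i$, so that Lemma~\ref{lem:aux1} is applicable to it. Since $\eta = 1$ outside $U_k(T)$ and $\nabla(\eta w) = \eta\nabla w + w\nabla\eta$,
\begin{equation*}
  b_k \le \int_\Omega \eta\, A\nabla w\cdot\nabla w = a(w,\eta w) - \int_\Omega w\, A\nabla w\cdot\nabla\eta.
\end{equation*}

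The next step is to remove $a(w,\eta w)$ using the defining equation. Because $\IH$ is a projection, $z := \eta w - \IH(\eta w)\in\Vf$. Since $\eta$ vanishes on $U_{k-1}(T)$, and the node variables $N_i(\eta w)$ vanish whenever $\sigma_i\subset U(z_i)\subset U_{k-1}(T)$, i.e.\ for all $z_i\in U_{k-2}(T)$, the interpolant $\IH(\eta w)$ vanishes on $U_{k-3}(T)$; hence $z$ vanishes on $U_{k-3}(T)\supset T$, so $z\in V(\Omega\setminus T)$ and $a(w,z) = F_T(z) = 0$. Therefore $a(w,\eta w) = a(w,\IH(\eta w))$, which I would bound by Cauchy--Schwarz followed by \eqref{eq:aux1}. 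The support of $\IH(\eta w)$ lies in a fixed-width annulus $U_{k+2}(T)\setminus U_{k-3}(T)$ (a node with $N_i(\eta w)\ne0$ must have $\sigma_i$ meeting both $\{\eta<1\}\subset U_k(T)$ and $\{\eta>0\}$, the outer nodes being killed by $w\in\Vf$), so both the Cauchy--Schwarz factor and \eqref{eq:aux1} localize the estimate to an annulus $R = U_{k+c_1}(T)\setminus U_{k-c_2}(T)$ whose widths $c_1,c_2$ depend only on $\Cdiam$. The remaining term is handled the same way: $\big|\int_\Omega w A\nabla w\cdot\nabla\eta\big|\le 2C_\rho H^{-1}\|A^{1/2}w\|_{L^2(U_k(T)\setminus U_{k-1}(T))}\|A^{1/2}\nabla w\|_{L^2(U_k(T)\setminus U_{k-1}(T))}$, and the factor $H^{-1}\|A^{1/2}w\|$ is absorbed by the contrast independent Poincar\'e estimate \eqref{eq:IHVfEnergyapprox} applied elementwise over the annulus, again producing a bound by $\|A^{1/2}\nabla w\|^2$ over an annulus of $\Cdiam$-bounded width. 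Collecting the two contributions gives
\begin{equation*}
  b_k \le C_{\rho,\Cpoinc,\kappa,\Cdiam}\, \|A^{1/2}\nabla w\|^2_{L^2(R)}, \qquad R = U_{k+c_1}(T)\setminus U_{k-c_2}(T).
\end{equation*}

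To turn this into geometric decay I would write the annulus energy as a difference of tails, $\|A^{1/2}\nabla w\|^2_{L^2(R)} = b_{k-c_2} - b_{k+c_1}$, and exploit monotonicity $b_{k+c_1}\le b_k$ to replace the left-hand side: setting $C := C_{\rho,\Cpoinc,\kappa,\Cdiam}$,
\begin{equation*}
  b_{k+c_1} \le b_k \le C\,(b_{k-c_2} - b_{k+c_1}), \quad\text{hence}\quad b_{k+c_1} \le \tfrac{C}{1+C}\, b_{k-c_2} =: \gamma\, b_{k-c_2},
\end{equation*}
with $0<\gamma<1$ independent of $\alpha$, $k$ and $H$. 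This is a contraction over an index shift $s = c_1+c_2$, valid precisely when $k-c_2\ge0$, which is guaranteed by the hypothesis $k\ge\Cdiam+5$ (the constants $c_1,c_2$ being the $\Cdiam$-dependent layer counts accumulated above). Iterating $b_j\le\gamma\, b_{j-s}$ down from level $k$ to the base level yields $b_k\le C\theta^k b_0$ with $\theta = \gamma^{1/s}\in(0,1)$, and taking square roots gives \eqref{eq:decay}.

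The main obstacle --- and the entire point of the construction --- is ensuring that none of these steps reintroduces the contrast. Classically, both $a(w,\IH(\eta w))$ and the $\nabla\eta$ term are bounded through interpolation estimates carrying $\alpha^{-1/2}$; here they are replaced verbatim by \eqref{eq:aux1} and \eqref{eq:IHVfEnergyapprox}, whose constants are $\alpha$-free because $\IH$ forces a contrast independent Poincar\'e inequality inside $\Omega^1$ (Lemma~\ref{lem:poincareIH}) while the $\Omega^\alpha$ contributions are absorbed by the weight $\alpha\le A$. The only bookkeeping care needed is to confirm that the annulus widths $c_1,c_2$ are fixed multiples of $\Cdiam$ (so that $s$, and hence $\theta$, are $H$-independent) and that $k\ge\Cdiam+5$ leaves enough separation for the recursion to start; everything else is the routine decay iteration.
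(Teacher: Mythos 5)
Your proposal is correct and follows essentially the same route as the paper's proof: the same modified cut-off function (constant on the $\Sigma_{1/4}(z_i)$ so that Lemma~\ref{lem:aux1} applies), the same replacement of $a(p^T,\eta p^T)$ by $a(p^T,\IH(\eta p^T))$ via the test function $\eta p^T-\IH(\eta p^T)\in\Vf$ vanishing near $T$, the same treatment of the $\nabla\eta$ term through \eqref{eq:IHVfEnergyapprox}, and the same annulus-to-tail-difference iteration yielding the geometric factor $\frac{c}{1+c}$. The only differences are cosmetic bookkeeping (you place the cut-off at level $k$ rather than $m=k-\Cdiam-3$ and leave the annulus widths $c_1,c_2$ symbolic, and your final $\theta$ absorbs an extra square root), all of which is harmless.
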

\begin{proof}
  \newcommand{\ktmp}{m}
  Let $\ktmp = k-\Cdiam-3 \ge 2$. 
  For brevity, we drop a few indices and denote by $U_\ktmp  = U_\ktmp (T)$,
  $p = p^T$ and $\eta = \eta^T_\ktmp $.
  \begin{equation*}
    \begin{aligned}
      \|A^{1/2} \nabla p\|^2_{L^2(\Omega \setminus U_\ktmp)} & = \int_{\Omega \setminus U_\ktmp } A \nabla p \cdot \nabla p 
      \le \int_{\Omega \setminus U_{\ktmp -1}} A \nabla p \cdot \eta \nabla p \\
      & = \underbrace{\int_{\Omega \setminus U_{\ktmp -2}} A \nabla p \cdot \nabla (\eta p)}_{=:\rm I} - \underbrace{\int_{\Omega \setminus U_{\ktmp -1}} A \nabla p \cdot p \nabla \eta}_{=:\rm II}.
    \end{aligned}
  \end{equation*}
  For term ${\rm I}$, we use that
  $\int_{\Omega\setminus U_{\ktmp -2}} A\nabla p \cdot \nabla(\eta p
  -\IH(\eta p)) = \int_{\Omega}A\nabla p \cdot \nabla(\eta p -\IH(\eta
  p)) = F_T(\eta p -\IH(\eta p)) = 0$,
  since $\eta p -\IH(\eta p)$ has no support in $U_{m-2}(T) \supset T$ (requires
  $\ktmp  \ge 2$). Since
  $\supp(\IH(\eta p)) \subset U_{\ktmp +1} \setminus U_{\ktmp -2}$ we get
  \begin{equation*}
    \begin{aligned}
      |{\rm I}| & = \left|\int_{U_{\ktmp +1} \setminus U_{\ktmp -2}} A \nabla p \cdot
      \nabla (\IH(\eta p)) \right| \\ 
      & \stackrel{\eqref{eq:aux1}}{\le} C_{\rho,\Cpoinc,\kappa,\Cdiam} \|A^{1/2} \nabla p\|_{L^2(U_{\ktmp +1} \setminus U_{\ktmp -2})} \|A^{1/2} \nabla p\|_{L^2(U_{\ktmp +\Cdiam+3} \setminus U_{\ktmp -\Cdiam-4})}.
    \end{aligned}
  \end{equation*}
  For term $\rm II$, we use that
  $\supp(\nabla \eta) \subset U_{\ktmp } \setminus U_{\ktmp -1}$ and
  $\|\nabla \eta \|_{L^{\infty}(\Omega)} \le C_{\rho} H^{-1}$ to get
  \begin{equation*}
    \begin{aligned}
      |{\rm II}| & \le C_{\rho} H^{-1} \|A^{1/2} \nabla p\|_{L^2(U_{\ktmp } \setminus U_{\ktmp -1})} \|A^{1/2} p\|_{L^2(U_{\ktmp } \setminus U_{\ktmp -1})} \\
      & \stackrel{\eqref{eq:IHVfEnergyapprox}}{\le} C_{\rho, \Cpoinc, \kappa, \Cdiam} \|A^{1/2} \nabla p\|_{L^2(U_{\ktmp } \setminus U_{\ktmp -1})} \|A^{1/2}\nabla p \|_{L^2(U_{\ktmp +\Cdiam} \setminus U_{\ktmp -\Cdiam-1})}.
    \end{aligned}
  \end{equation*}
  We obtain, with $c=C_{\rho, \Cpoinc, \kappa, \Cdiam}$,
  \begin{equation*}
    \begin{aligned}
      \|A^{1/2} \nabla p\|^2_{L^2(\Omega \setminus U_{\ktmp +\Cdiam+3})} & \le \|A^{1/2} \nabla p\|^2_{L^2(\Omega \setminus U_\ktmp)} \le c\| A^{1/2} \nabla p \|^2_{L^{2}(U_{\ktmp +\Cdiam+3} \setminus U_{\ktmp -\Cdiam-4})} \\
      & = c(\| A^{1/2} \nabla p \|^2_{L^{2}(\Omega \setminus U_{\ktmp -\Cdiam-4})} - \| A^{1/2} \nabla p \|^2_{L^{2}(\Omega \setminus U_{\ktmp +\Cdiam+3})}).
    \end{aligned}
  \end{equation*}
  from which we get the following decay result over $2\Cdiam+7$ patch layers,
  \begin{equation}
    \label{eq:telescope}
    \begin{aligned}
      \|A^{1/2} \nabla p\|^2_{L^2(\Omega \setminus U_{\ktmp +\Cdiam+3})} \le \frac{c}{1+c} \|A^{1/2} \nabla p\|^2_{L^2(\Omega \setminus U_{\ktmp -\Cdiam-4})}.
    \end{aligned}
  \end{equation}
  Successive application of \eqref{eq:telescope} yields
  \begin{equation*}
  \|A^{1/2} \nabla p\|^2_{L^2(\Omega \setminus U_{\ktmp +\Cdiam+3})} \le \left(\frac{c}{1+c}\right)^{\lfloor \ktmp /(2\Cdiam+7) \rfloor} \|A^{1/2} \nabla p\|^2_{L^2(\Omega)}.
\end{equation*}
  The final result is obtained by choosing
  $\theta = \left(\frac{c}{1+c}\right)^{1/(4\Cdiam+14)}$, which is
  independent of $\alpha$.
\end{proof}

\begin{lemma}[Contrast independent localization error]
  \label{lem:localization_error}
  Let $F_T(v) \in V'$ with $F_T(v) = 0$ for
  $v \in V(\Omega \setminus T)$.  Let $p^T \in \Vf$ satisfy, for all
  $v \in \Vf$,
  \begin{equation*}
    \int_\Omega A \nabla p^T \cdot \nabla v = F_T(v),
  \end{equation*}
  and $p = \sum_{T\in\triH} p^T$. Further, let $p_k^T \in \Vf(U_k(T))$
  satisfy, for all $v \in \Vf(U_k(T))$,
  \begin{equation*}
    \int_{U_k(T)} A \nabla p_k^T \cdot \nabla v = F_T(v),
  \end{equation*}
  and $p_k = \sum_{T\in\triH} p_k^T$. Then, for $k \ge \Cdiam+5$,
  \begin{equation*}
    \|A^{1/2} \nabla (p -p_k) \|_{L^2(\Omega)} \le C_{\rho, \Cpoinc, \kappa, \Cdiam} k^{d/2}\theta^{k} \left(\sum_{T \in \triH} \| A^{1/2} \nabla p^T \|^2_{L^2(\Omega)}\right)^{1/2},
  \end{equation*}
  where $0 < \theta < 1$ and
  $C_{\rho, \Cpoinc, \kappa, \Cdiam}$ do not depend on $\alpha$, $k$ or $H$.
\end{lemma}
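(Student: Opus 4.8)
The plan is to reduce the estimate to a single-element localization bound and then to sum over all $T\in\triH$ with a finite-overlap argument that produces the factor $k^{d/2}$. Throughout I write $e^T = p^T - p_k^T$, so that $p - p_k = \sum_{T\in\triH} e^T$, and I work entirely in the energy norm $\|A^{1/2}\nabla\cdot\|_{L^2(\cdot)}$. The essential point is that every constant below will come from Lemmas~\ref{lem:poincareIH}, \ref{lem:aux1} and \ref{lem:decay}, all of which are already contrast independent, so no factor $\alpha^{\pm1/2}$ can appear.

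First I would establish the single-element bound $\|A^{1/2}\nabla e^T\|_{L^2(\Omega)} \le C_{\rho,\Cpoinc,\kappa,\Cdiam}\,\theta^{k}\,\|A^{1/2}\nabla p^T\|_{L^2(\Omega)}$ for $k \ge \Cdiam+5$. Subtracting the two defining identities shows $\int_\Omega A\nabla e^T\cdot\nabla v = 0$ for all $v\in\Vf(U_k(T))$, so $p_k^T$ is the $a$-orthogonal projection of $p^T$ onto $\Vf(U_k(T))$ and $\|A^{1/2}\nabla e^T\|_{L^2(\Omega)} = \min_{v\in\Vf(U_k(T))}\|A^{1/2}\nabla(p^T-v)\|_{L^2(\Omega)}$. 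I would then exhibit a concrete competitor from the cut-off: with $\eta = \eta_{T,k-\Cdiam-3}$ (constant on every $\Sigma(z_i)$, so that Lemma~\ref{lem:aux1} applies) set $v = p^T - (\eta p^T - \IH(\eta p^T))$. This $v$ lies in $\Vf(U_k(T))$, since it is in $\ker(\IH)$ because $\IH$ is a projection and $\IH p^T = 0$, and it is supported in $U_k(T)$ because $\IH(\eta p^T)$ lives only in the transition layer of $\eta$ (where $\eta\equiv1$ on a full node patch one has $\eta p^T = p^T$ and $\IH p^T = 0$). Hence $\|A^{1/2}\nabla(p^T-v)\| = \|A^{1/2}\nabla(\eta p^T - \IH(\eta p^T))\|$, which I split by the triangle inequality into the interpolation part, bounded by \eqref{eq:aux1} applied on the exterior region $\omega = \Omega\setminus U_{k-\Cdiam-4}(T)$ where $\eta\not\equiv0$, and the part $\eta\nabla p^T + p^T\nabla\eta$, bounded using $|\eta|\le1$, $\|\nabla\eta\|_{L^\infty}\le C_\rho H^{-1}$ and the energy Poincar\'e inequality \eqref{eq:IHVfEnergyapprox}. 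Every resulting term is an energy of $p^T$ over a region contained in $\Omega\setminus U_{k-\Cdiam-c}(T)$, so the decay estimate \eqref{eq:decay} of Lemma~\ref{lem:decay} (absorbing the layer shift into the constant and $\theta$) closes the bound.

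For the summation I would argue by duality. Since $p-p_k\in\Vf$, it suffices to bound $\int_\Omega A\nabla(p-p_k)\cdot\nabla\varphi = \sum_T \int_\Omega A\nabla e^T\cdot\nabla\varphi$ for arbitrary $\varphi\in\Vf$ and then take $\varphi = p-p_k$. The crucial observation is that $\int_\Omega A\nabla e^T\cdot\nabla\varphi = F_T(\varphi) - \int_{U_k(T)} A\nabla p_k^T\cdot\nabla\varphi$ depends on $\varphi$ only through $\varphi|_{U_k(T)}$, because $F_T$ vanishes on functions supported away from $T\subset U_k(T)$ and $p_k^T$ is supported in $U_k(T)$. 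Thus I may replace $\varphi$ by a localized surrogate $\tilde\varphi^T\in\Vf$ agreeing with $\varphi$ on $U_k(T)$: taking a cut-off $\zeta$ equal to $1$ on $U_k(T)$ and $0$ outside $U_{k+c}(T)$, constant on each $\Sigma(z_i)$, I set $\tilde\varphi^T = \zeta\varphi - \IH(\zeta\varphi)$, which is in $\ker(\IH)$ and satisfies $\|A^{1/2}\nabla\tilde\varphi^T\|_{L^2(\Omega)} \le C_{\rho,\Cpoinc,\kappa,\Cdiam}\,\|A^{1/2}\nabla\varphi\|_{L^2(U_{k+c}(T))}$ by the same combination of \eqref{eq:aux1} and \eqref{eq:IHVfEnergyapprox} used above (this is exactly where $\varphi\in\Vf$ enters). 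Combining this with the single-element bound gives $|\int_\Omega A\nabla e^T\cdot\nabla\varphi| = |\int_\Omega A\nabla e^T\cdot\nabla\tilde\varphi^T| \le C\theta^k\|A^{1/2}\nabla p^T\|_{L^2(\Omega)}\,\|A^{1/2}\nabla\varphi\|_{L^2(U_{k+c}(T))}$. Summing over $T$, a Cauchy--Schwarz in $T$ together with the finite overlap $\sum_T \mathbf 1_{U_{k+c}(T)} \le C_\rho (k+c)^d$ yields $|\int_\Omega A\nabla(p-p_k)\cdot\nabla\varphi| \le C k^{d/2}\theta^k \bigl(\sum_T \|A^{1/2}\nabla p^T\|_{L^2(\Omega)}^2\bigr)^{1/2}\|A^{1/2}\nabla\varphi\|_{L^2(\Omega)}$, and $\varphi = p-p_k$ finishes the proof.

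The step I expect to be the main obstacle is the summation, not the single-element estimate. The naive bound $|\int_\Omega A\nabla e^T\cdot\nabla\varphi|\le\|A^{1/2}\nabla e^T\|\,\|A^{1/2}\nabla\varphi\|$ would only produce a factor equal to the square root of the number of elements, because each $e^T$ has global support (it equals $p^T$ outside $U_k(T)$ and $p^T$ merely decays). The entire difficulty is to turn this global interaction into a local one so that the patches overlap only $C_\rho k^d$ times; the locality of the functional $\varphi\mapsto\int_\Omega A\nabla e^T\cdot\nabla\varphi$ on $U_k(T)$ and the $\Vf$-preserving localized surrogate $\tilde\varphi^T$ are precisely what make this possible. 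Verifying that this surrogate can be built with a contrast independent energy bound, which again rests on Lemmas~\ref{lem:aux1} and \ref{lem:poincareIH}, is the technical heart of the argument.
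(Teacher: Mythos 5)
Your proposal is correct and follows essentially the same route as the paper: your single-element bound (best approximation in $\Vf(U_k(T))$ with the competitor built from a cut-off, estimated via Lemma~\ref{lem:aux1}, \eqref{eq:IHVfEnergyapprox} and the decay Lemma~\ref{lem:decay}) is exactly the paper's bound on the factor $\|A^{1/2}\nabla(p^T-p_k^T)\|_{L^2(\Omega)}$, and your localized surrogate $\tilde\varphi^T=\zeta\varphi-\IH(\zeta\varphi)$, evaluated at $\varphi=p-p_k$, coincides with the paper's test function $(1-\eta^T_{k+2})z+\IH(\eta^T_{k+2}z)$, after which both arguments finish with the same Cauchy--Schwarz and finite-overlap count producing $k^{d/2}$. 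The only detail to tighten is that $\zeta$ must equal $1$ on $U_{k+2}(T)$, not merely on $U_k(T)$, so that $\IH(\zeta\varphi)$ has no support inside $U_k(T)$ and the surrogate genuinely agrees with $\varphi$ there (otherwise the interaction with $p_k^T$ on the boundary strip is uncontrolled); this is precisely the paper's choice of the cut-off $\eta^T_{k+2}$.
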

\begin{proof}
  Let $z = p - p_k = \sum_{T\in\triH} (p^T - p_k^T)$.  Since, for all
  $T$, $\tilde z = \eta^T_{k+2}z - \IH(\eta^T_{k+2}z) \in \Vf$, with
  $F_T(\tilde z) = 0$ and $p^T_k$ and $\tilde z$ lack common support,
  we have $\int_\Omega A\nabla \tilde z \cdot \nabla (p^T - p^T_k) = 0$ and we get
  \begin{equation}
    \label{eq:localmain}
    \begin{aligned}
      \|A^{1/2} \nabla z\|^2_{L^2(\Omega)} & = \sum_{T \in \triH} \int_{\Omega} A \nabla ((1 - \eta^T_{k+2})z + \IH(\eta^T_{k+2}z)) \cdot \nabla (p^T - p_k^T) \\
      & \le \sum_{T \in \triH} \Big(\underbrace{\| A^{1/2} \nabla ((1 - \eta^T_{k+2})z)\|_{L^2(\Omega)}}_{=:\rm I}  + \underbrace{\| A^{1/2} \nabla \IH(\eta^T_{k+2}z) \|_{L^2(\Omega)}}_{=:\rm II} \Big) \cdot \\
      & \phantom{\le{}} \cdot \underbrace{\| A^{1/2} \nabla (p^T - p_k^T) \|_{L^2(\Omega)}}_{=:\rm III}. \\
    \end{aligned}
  \end{equation}
  For term $\rm I$, we have
  \begin{equation*}
    \begin{aligned}
      \|A^{1/2} \nabla ((1 - \eta^T_{k+2})z) \|_{L^{2}(\Omega)} & \le \|A^{1/2} \nabla ((1 - \eta^T_{k+2})z) \|_{L^{2}(U_{k+2}(T))} \\
        & \le C_{\rho} H^{-1} \| A^{1/2} z\|_{L^2(U_{k+2}(T) \setminus U_{k+1}(T))} + \| A^{1/2} \nabla z\|_{L^2(U_{k+2}(T))} \\
        & \stackrel{\eqref{eq:IHVfEnergyapprox}}{\le}  C_{\rho, \Cpoinc, \kappa, \Cdiam} \| A^{1/2} \nabla z\|_{L^2(U_{k+\Cdiam+2}(T))}.
    \end{aligned}
  \end{equation*}
  For term $\rm II$, we have
  \begin{equation*}
    \begin{aligned}
      \| A^{1/2} \nabla \IH(\eta^T_{k+2}z) \|_{L^2(\Omega)} & = \| A^{1/2} \nabla \IH(\eta^T_{k+2}z) \|_{L^2(U_{k+3}(T) \setminus U_{k+1}(T))} \\
      & \stackrel{\eqref{eq:aux1}}{\le} C_{\rho,\Cpoinc,\kappa,\Cdiam} \|A^{1/2} \nabla z\|_{L^2(U_{k+\Cdiam+5}(T))}.
    \end{aligned}
  \end{equation*}
  For factor $\rm III$, we note that
  $\tilde p^T_k = (1-\eta^T_{k-1})p^T - \IH((1-\eta^T_{k-1})p^T) \in
  \Vf(U_k(T))$
  and that $p^T_k$ is best-approximation of $p^T$ in $\Vf(U_k(T))$ in energy norm,
  why
  \begin{equation*}
    \begin{aligned}
      \|A^{1/2}\nabla(p^T - p_k^T)\|_{L^2(\Omega)} & \le \|A^{1/2}\nabla(p^T - \tilde p^T_k)\|_{L^2(\Omega)} \\
      & \le \|A^{1/2}\nabla(\eta^T_{k-1}p^T - \IH(\eta^T_{k-1} p^T))\|_{L^2(\Omega)} \\
      & \stackrel{\eqref{eq:IHVfEnergyapprox}}{\le} C_{\rho,\Cpoinc,\kappa,\Cdiam} \|A^{1/2}\nabla p^T\|_{L^2(\Omega \setminus U_{k-\Cdiam-2}(T))} + {}\\
      & \phantom{\le{}}+\|A^{1/2} \nabla \IH(\eta^T_{k-1} p^T))\|_{L^2(U_{k}(T) \setminus U_{k-3}(T)))} \\
      & \stackrel{\eqref{eq:aux1}}{\le} C_{\rho,\Cpoinc,\kappa,\Cdiam} \|A^{1/2} \nabla p^T\|_{L^2(\Omega \setminus U_{k-\Cdiam-3}(T))} \\
      & \stackrel{\eqref{eq:decay}}{\le} C_{\rho,\Cpoinc,\kappa,\Cdiam} \theta^{k-\Cdiam-3}\|A^{1/2}\nabla p^T\|_{L^2(\Omega)}. \\
    \end{aligned}
  \end{equation*}
  Collecting the terms and, we continue from equation \eqref{eq:localmain} using H\"older's inequality and obtain
  \begin{equation*}
    \begin{aligned}
      \|A^{1/2} \nabla z\|^2_{L^2(\Omega)}  & \le C_{\rho,\Cpoinc,\kappa,\Cdiam} \theta^{k-\Cdiam-3} \sum_{T \in \triH} \|A^{1/2}\nabla z\|_{L^2(U_{k+\Cdiam+5}(T))} \|A^{1/2}\nabla p^T\|_{L^2(\Omega)}\\
      & \le  C_{\rho,\Cpoinc,\kappa,\Cdiam} (k+\Cdiam+5)^{d/2} \theta^{k-\Cdiam-3} \|A^{1/2}\nabla z\|_{L^2(\Omega)} \left(\sum_{T \in \triH} \|A^{1/2}\nabla p^T\|^2_{L^2(\Omega)}\right)^{1/2}.\\
    \end{aligned}
  \end{equation*}
  The lemma follows, since $\Cdiam + 5 \le k$.
\end{proof}

\subsection{Error bounds for localized multiscale method}
As a result of the contrast independent localization error, we get the
following error bounds for the localized multiscale method, without
and with right hand side correction.
\begin{theorem}[Error bound without right hand side correction]
  \label{lem:full_error_norhs}
  If $\umsHk$ is computed as described in Section~\ref{sec:localized}, then
  \begin{equation*}
    \begin{aligned}
      \III{u - \umsHk} \le C_{\Omega, \rho, \Cpoinc, \kappa, \Cdiam} \left(\alpha^{-1} k^{d/2}\theta^{k} \|f\|_{L^2(\Omega)} + H \|A^{-1/2}f\|_{L^{2}(\Omega)}\right),
    \end{aligned}
  \end{equation*}
  where $C_{\Omega, \rho, \Cpoinc, \kappa, \Cdiam}$ and $0 < \theta < 1$ are
  independent of $\alpha$, $k$, and $H$.
\end{theorem}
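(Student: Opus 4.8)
The plan is to start from the splitting already derived in \eqref{eq:localizationerror}, which, with the admissible choice $v = (\id - Q_k)\uH \in \VmsHk$, gives
\[
  \III{u - \umsHk} \le \III{\uf} + \III{(Q - Q_k)\uH},
\]
and to bound the two terms separately: the first will produce the $H\|A^{-1/2}f\|_{L^2(\Omega)}$ contribution and the second the $\alpha^{-1}k^{d/2}\theta^{k}\|f\|_{L^2(\Omega)}$ contribution. The essential new ingredient is that the contrast-independent kernel estimate \eqref{eq:IHVfEnergyapprox} lets me sharpen the classical a priori bound \eqref{eq:idealaproiri}, and that Lemma~\ref{lem:localization_error} supplies a decay rate $\theta$ that does not depend on $\alpha$.

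For the non-local multiscale error I would refine \eqref{eq:idealaproiri}. Since $\uf \in \Vf$ and $\IH\uf = 0$, Galerkin orthogonality gives $\III{\uf}^2 = (f,\uf)$, and an $A$-weighted Cauchy--Schwarz inequality yields $(f,\uf) \le \|A^{-1/2}f\|_{L^2(\Omega)}\|A^{1/2}\uf\|_{L^2(\Omega)}$. I would then apply \eqref{eq:IHVfEnergyapprox} elementwise and sum over $T$, using the finite overlap of the patches $U_{\Cdiam}(T)$ (bounded in terms of $\rho$, $\Cdiam$, $d$), to obtain $\|A^{1/2}\uf\|_{L^2(\Omega)} \le C_{\rho,\Cpoinc,\kappa,\Cdiam}H\III{\uf}$. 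Dividing by $\III{\uf}$ gives $\III{\uf} \le C_{\rho,\Cpoinc,\kappa,\Cdiam}H\|A^{-1/2}f\|_{L^2(\Omega)}$, the desired second term.

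For the localization error I would apply Lemma~\ref{lem:localization_error} with $p^T = Q_T\uH$, i.e.\ $F_T(\vf) = \int_T A\nabla\uH\cdot\nabla\vf$, so that $p = Q\uH$ and $p_k = Q_k\uH$; this directly yields $\III{(Q-Q_k)\uH} \le C_{\rho,\Cpoinc,\kappa,\Cdiam}k^{d/2}\theta^{k}\big(\sum_{T}\|A^{1/2}\nabla Q_T\uH\|_{L^2(\Omega)}^2\big)^{1/2}$ with $\theta$ independent of $\alpha$. Testing the corrector equation with $Q_T\uH$ and applying Cauchy--Schwarz gives $\|A^{1/2}\nabla Q_T\uH\|_{L^2(\Omega)} \le \|A^{1/2}\nabla\uH\|_{L^2(T)}$, so the sum is at most $\III{\uH}^2$. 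It remains to bound $\III{\uH}$: applying $\IH$ to $u = \uH - Q\uH + \uf$ and using that $Q\uH,\uf\in\ker(\IH)$ shows $\uH = \IH u$, whence $A\le 1$ and the stability estimate \eqref{eq:IHH1stab} give $\III{\uH} = \|A^{1/2}\nabla\IH u\|_{L^2(\Omega)} \le \|\nabla\IH u\|_{L^2(\Omega)} \le C_{\rho,\kappa}\|\nabla u\|_{L^2(\Omega)}$. Combining $\|\nabla u\|_{L^2(\Omega)} \le \alpha^{-1/2}\III{u}$ with the global energy estimate $\III{u} \le C_\Omega\alpha^{-1/2}\|f\|_{L^2(\Omega)}$ (coercivity together with the Poincar\'e inequality on $\Omega$) produces $\III{\uH} \le C_{\Omega,\rho,\kappa}\alpha^{-1}\|f\|_{L^2(\Omega)}$, the first term. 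Adding the two contributions finishes the proof.

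The routine parts are the overlap counting and the elementwise Cauchy--Schwarz steps. The step that really needs care, and where the contrast behavior is decided, is the bound on $\III{\uH}$: the factor $\alpha^{-1}$ comes from two separate $\alpha^{-1/2}$ losses — one in passing from the $A$-weighted energy norm of $\IH u$ to the unweighted $H^1$-stability estimate, and one in the global a priori bound for $\III{u}$ — while the decay rate $\theta$ stays contrast free precisely because it is inherited from Lemma~\ref{lem:localization_error}, which rests on the contrast-independent kernel inequalities \eqref{eq:IHVfapprox}--\eqref{eq:IHVfEnergyapprox} rather than on the classical contrast-dependent Poincar\'e estimate.
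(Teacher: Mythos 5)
Your proposal is correct and follows essentially the same route as the paper: the same decomposition into $\III{\uf}+\III{(Q-Q_k)u_H}$, the same use of Lemma~\ref{lem:localization_error} with $p^T=Q_Tu_H$, the same elementwise Cauchy--Schwarz bound $\|A^{1/2}\nabla Q_Tu_H\|\le\|A^{1/2}\nabla u_H\|_{L^2(T)}$, and the same contrast-independent kernel inequalities to handle $\III{\uf}$. The only (immaterial) differences are that you bound $\|A^{1/2}\uf\|_{L^2(\Omega)}$ by summing \eqref{eq:IHVfEnergyapprox} over elements rather than splitting globally over $\Omega^1$ and $\Omega^\alpha$ via \eqref{eq:IHL2approx} and \eqref{eq:IHVfapprox}, and that you write $u_H=\IH u$ and invoke the a priori bound on $\III{u}$ where the paper writes $u_H=\IH\umsH$ and bounds $\III{\umsH}$.
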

\begin{proof}
  Galerkin orthogonality implies
  $\III{u - \umsHk} \le \III{u - \vmsHk}$ for any
  $\vmsHk \in \VmsHk$. We decompose
  $u = \umsH + \uf = u_H - Qu_H + \uf$ and choose
  $\vmsHk = u_H - Q_k u_H$. Then by Lemma~\ref{lem:localization_error},
  \begin{equation*}
    \begin{aligned}
      \III{u - \umsHk} & \le \III{\umsH - \vmsHk + \uf} \le \III{Qu_H - Q_ku_H} + \III{\uf} \\
      & \le C_{\rho, \Cpoinc, \kappa, \Cdiam} k^{d/2}\theta^{k} \left(\sum_{T \in \triH} \III{Q_T u_H}^2\right)^{1/2} + \III{\uf}. \\
    \end{aligned}
  \end{equation*}
  For the first term, using \eqref{eq:IHH1stab}, we get
  \begin{equation*}
    \begin{aligned}
      \left(\sum_{T \in \triH} \III{Q_T u_H}^2\right)^{1/2} & \le \left(\sum_{T \in \triH} \|A^{1/2} \nabla u_H \|^2_{L^2(T)} \right)^{1/2} = \III{u_H} 
      = \III{\IH(\umsH)} \le \|\nabla \IH(\umsH)\|_{L^2(\Omega)} \\
      &\le C_{\rho,\kappa} \|\nabla \umsH\|_{L^2(\Omega)} \le \alpha^{-1/2} C_{\rho,\kappa} \III{\umsH} \le \alpha^{-1} C_{\Omega,\rho,\kappa} \|f\|_{L^2(\Omega)}.
    \end{aligned}
  \end{equation*}
  For the second term, we get
  \begin{equation*}
    \begin{aligned}
      \III{\uf}^2 & = (A^{-1/2}f,A^{1/2}\uf) \le \| A^{-1/2} f\|_{L^2(\Omega)} \| A^{1/2} \uf \|_{L^2(\Omega)}\\
      & \le \| A^{-1/2} f\|_{L^2(\Omega)} (\alpha^{1/2}\| \uf - I_H \uf\|_{L^2(\Omega^\alpha)} + \|\uf \|_{L^2(\Omega^1)}) \\
      & \stackrel{\eqref{eq:IHL2approx}, \eqref{eq:IHVfapprox}}{\le} C_{\rho,\Cpoinc,\kappa,\Cdiam} H \| A^{-1/2} f\|_{L^2(\Omega)} \| A^{1/2}\nabla \uf \|_{L^2(\Omega)} \\
      &= C_{\rho,\Cpoinc,\kappa,\Cdiam} H \| A^{-1/2} f\|_{L^2(\Omega)} \III{\uf}, \\
    \end{aligned}
  \end{equation*}
  which proves the lemma.
\end{proof}
\begin{theorem}[Error bound with right hand side correction]
  \label{lem:full_error}
  If $u_{H,k} = \umsLHk + \ufk$ is computed as described in Section~\ref{sec:rhscorrection}, then
  \begin{equation*}
      \III{u - u_{H,k}} \le C_{\Omega, \rho, \Cpoinc, \kappa, \Cdiam} k^{d/2}\theta^{k} \left(\alpha^{-1} \|f\|_{L^2(\Omega)} + H \|A^{-1/2} f\|_{L^2(\Omega)} \right),
  \end{equation*}
  where $C_{\Omega, \rho, \Cpoinc, \kappa, \Cdiam}$ and
  $0 < \theta < 1$ are independent of $\alpha$, $k$, and $H$. The
  second term is dominated by the first, but kept for easy comparison
  with the previous lemma.
\end{theorem}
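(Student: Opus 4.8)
The plan is to mirror the proof of Theorem~\ref{lem:full_error_norhs}, but to invoke the contrast-independent localization estimate of Lemma~\ref{lem:localization_error} \emph{twice}: once for the basis correctors $Q_{k,T}$ and once for the right hand side corrections $R_{k,T}$. As in Section~\ref{sec:rhscorrection}, Galerkin orthogonality for \eqref{eq:localizedcorrectedmultiscale} gives $\III{u - u_{H,k}} \le \III{u - v - \ufk}$ for every $v \in \VmsHk$. Decomposing the exact solution as $u = \umsH + \uf = \uH - Q\uH + \uf$ and choosing $v = \uH - Q_k \uH$, the two occurrences of $\uH$ cancel and a triangle inequality leaves
\begin{equation*}
  \III{u - u_{H,k}} \le \III{(Q - Q_k)\uH} + \III{\uf - \ufk}.
\end{equation*}

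For the first term I would proceed exactly as in Theorem~\ref{lem:full_error_norhs}: applying Lemma~\ref{lem:localization_error} with $p^T = Q_T \uH$ (so that $F_T(v) = \int_T A\nabla \uH \cdot \nabla v$) yields $\III{(Q-Q_k)\uH} \le C k^{d/2}\theta^{k}(\sum_{T\in\triH} \III{Q_T \uH}^2)^{1/2}$, and the chain already established there, namely $(\sum_{T} \III{Q_T \uH}^2)^{1/2} \le \III{\uH} \le C\alpha^{-1}\|f\|_{L^2(\Omega)}$, bounds the sum. This produces the term $\alpha^{-1}k^{d/2}\theta^k\|f\|_{L^2(\Omega)}$.

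The second term is where the new work lies and is the main obstacle. I would apply Lemma~\ref{lem:localization_error} a second time, now with $p^T = R_T f$, i.e.\ with the functional $F_T(v) = \int_T f v$; since $\uf = Rf = \sum_T R_T f$ and $\ufk = R_k f = \sum_T R_{k,T} f$, the lemma gives $\III{\uf - \ufk} \le C k^{d/2}\theta^k (\sum_{T} \III{R_T f}^2)^{1/2}$. It remains to bound $(\sum_{T} \III{R_T f}^2)^{1/2}$ by $C H \|A^{-1/2} f\|_{L^2(\Omega)}$ \emph{independently of the contrast}. To do this I would test the defining equation of $R_T f$ against itself and split the coefficient symmetrically,
\begin{equation*}
  \III{R_T f}^2 = \int_T f\, R_T f = \int_T (A^{-1/2} f)(A^{1/2} R_T f) \le \|A^{-1/2} f\|_{L^2(T)}\, \|A^{1/2} R_T f\|_{L^2(T)}.
\end{equation*}
Because $R_T f \in \Vf = \ker(\IH)$, the contrast-independent Poincar\'e-type estimate \eqref{eq:IHVfEnergyapprox} applies and gives $\|A^{1/2} R_T f\|_{L^2(T)} \le C H \|A^{1/2}\nabla R_T f\|_{L^2(U_\Cdiam(T))} \le C H \III{R_T f}$. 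Cancelling one factor of $\III{R_T f}$ leaves $\III{R_T f}^2 \le C H^2 \|A^{-1/2} f\|^2_{L^2(T)}$, and summing over the disjoint elements $T \in \triH$ (which tile $\Omega$) gives $(\sum_{T} \III{R_T f}^2)^{1/2} \le C H \|A^{-1/2} f\|_{L^2(\Omega)}$, hence $\III{\uf - \ufk} \le C k^{d/2}\theta^k H \|A^{-1/2} f\|_{L^2(\Omega)}$.

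Collecting the two contributions yields the claimed bound with a single contrast-independent rate $\theta$ multiplying both error terms. Finally, since $\|A^{-1/2}f\|_{L^2(\Omega)} \le \alpha^{-1/2}\|f\|_{L^2(\Omega)}$ and $H \le 1$, the estimate $H\|A^{-1/2}f\|_{L^2(\Omega)} \le \alpha^{-1}\|f\|_{L^2(\Omega)}$ shows the second term is dominated by the first, as remarked after the statement; the crucial gain over Section~\ref{sec:rhscorrection} is that $\theta$ no longer depends on $\alpha$.
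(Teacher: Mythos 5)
Your proposal is correct and follows essentially the same route as the paper: the same decomposition into $\III{(Q-Q_k)u_H}+\III{\uf-\ufk}$, the same double application of Lemma~\ref{lem:localization_error}, and the same key estimate $\III{R_Tf}^2 \le \|A^{-1/2}f\|_{L^2(T)}\|A^{1/2}R_Tf\|_{L^2(T)} \le C H\|A^{-1/2}f\|_{L^2(T)}\III{R_Tf}$ via \eqref{eq:IHVfEnergyapprox}. The paper leaves the final summation over elements implicit, which you spell out; otherwise the arguments coincide.
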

\begin{proof}
  Galerkin orthogonality implies
  $\III{\umsH - \umsLHk} \le \III{\umsH - \vmsHk}$ for any
  $\vmsHk \in \VmsHk$. We decompose
  $u = \umsH + \uf = u_H - Qu_H + \uf$ and choose
  $\vmsHk = u_H - Q_k u_H$. Then by Lemma~\ref{lem:localization_error},
  \begin{equation*}
    \begin{aligned}
      \III{u - u_{H,k}} & \le \III{\umsH - \umsLHk} + \III{\uf - \ufk} \le \III{Qu_H - Q_ku_H} + \III{Rf - R_kf} \\
      & \le C_{\rho, \Cpoinc, \kappa, \Cdiam} k^{d/2}\theta^{k} \left(\left(\sum_{T \in \triH} \III{Q_T u_H}^2\right)^{1/2} + \left(\sum_{T \in \triH} \III{R_T f}^2\right)^{1/2}\right). \\
    \end{aligned}
  \end{equation*}
  The first term is bounded as in
  Theorem~\ref{lem:full_error_norhs}. For the second term, we use
  \eqref{eq:IHVfEnergyapprox} in Lemma~\ref{lem:poincareIH} and get
  \begin{equation*}
    \begin{aligned}
    \III{R_Tf}^2 & \le \|A^{-1/2} f\|_{L^2(T)} \|A^{1/2} R_T f\|_{L^2(T)} \le C_{\rho,\Cpoinc,\kappa,\Cdiam} H \|A^{-1/2} f\|_{L^2(T)} \III{R_T f},
    \end{aligned}
  \end{equation*}
  which concludes the proof.
\end{proof}

\section{Numerical experiments} 
\label{sec:num}
We present three numerical experiments with different coefficients to
illustrate how the full error of the solution with right hand side
correction (Theorem~\ref{lem:full_error}) depends on the choice of
patch size $k$ and contrast $\alpha$ for six different interpolation
operators. The results are discussed in Section~\ref{sec:discussion}.

\subsection{Mesh and coefficient}
The computational domain is $\Omega = [0,1]^2$ in all numerical
experiments and we use the mesh family shown in
Figure~\ref{fig:mesh}. In addition to the coarse mesh $\triH$
parametrized by $H$, we use a fine mesh $\trih$ parametrized by a fine
mesh parameter $h < H$.  Although the method above is described in
terms an infinite dimensional full space $V$, we use the fine mesh to
discretize $V$ into a $\mathcal{P}1$ FE space $V_h$, with basis
functions $\phi_{h,x}$ for nodes $x$ and $h = 2^{-10}$.  The
coefficient $A$ is defined as a piecewise constant function on the
fine mesh, taking either value $1$ or $\alpha$ on every fine element.
In all figures depicting a coefficient, black corresponds to $\Omega^1$
and white to $\Omega^\alpha$.
\begin{figure}[h]
  \centering
  \begin{subfigure}{.4\textwidth}
    \centering
    \begin{tikzpicture}[scale=1.5]
      \begin{scope}
        \gridone;
      \end{scope}
    \end{tikzpicture}%
    \caption{Coarsest mesh, $H = 1$.}
  \end{subfigure}
  \hspace{1em}
  \begin{subfigure}{.4\textwidth}
    \centering
    \begin{tikzpicture}[scale=1.5]
      \begin{scope}[scale=0.5]
        \gridtwo;
      \end{scope}
    \end{tikzpicture}%
    \caption{One refinement, $H = 1/2$.}
  \end{subfigure}
  \caption{Family $\triH$ of triangulations of the unit square.}
  \label{fig:mesh}
\end{figure}%

\subsection{Interpolation operators}
The choice of interpolation operator $\IIH$ determines the properties
of the multiscale method. Besides $\IH$ and $\IHinf$, we study four
additional interpolation operators. We comment on all of them below.

\subsubsection{Operators $\IH$ and $\IHinf$}
The formal definitions of $\IH$ and $\IHinf$ can be found in
Section~\ref{sec:definition} and are based on the subdomains
$\omega^1_i$ through the integration domains $\sigma_i$. The procedure
is reversed in our implementation: $\omega^1_i$ is implicitly defined
from $\sigma_i$, which is determined in the following procedure. For
each node $z_i$:
\begin{enumerate}
\item If $A = \alpha$ in all fine elements neighboring $z_i$ it is a class II
  node and $\sigma_i = \Sigma_\delta(z_i)$, i.e.\ 1/4 scaling of a
  node-patch for $\IH$, and full node-patch for $\IHinf$.
\item Otherwise the node is a class I node. Select any element neighboring
  $z_i$ with $A = 1$. Let $\sigma_i$ be all fine elements within the
  node patch for which there is an edge-incident path of elements with
  $A=1$ to the selected element. This guarantees that $\sigma_i$ is
  always a connected subset of $\Omega^1$.
\end{enumerate}
As a result of this procedure, whether
Assumption~\ref{ass:localpoincare} is satisfied or not depends on the
node placement.

\subsubsection{Node patch Scott--Zhang $\IHsz$}
\label{sec:IHsz}
The node patch Scott--Zhang quasi-interpolation is denoted by
$\IHsz : V \to \VH$ and is defined, for every free node $z_i \in \nodesfreeH$, by
\begin{equation*}
  (\IHsz v)(z_i) = N_i(v)
\end{equation*}
with $\sigma_i = U(z_i)$. Here, $N_i$ is defined as in
Section~\ref{sec:IHsz}.  This operator was introduced in
\cite{ScZh90}, where also the stability and approximability properties
asserted in Assumption~\ref{ass:qio} were shown. This operator is a
Cl\'ement-type quasi-interpolation operators similar to those used in
many previous works on the LOD technique, e.g.\ \cite{AbHe16, BrPe16,
  ElGiHe14, GaPe15, HeMa14, OhVe16, Pe15}.

\subsubsection{Nodal interpolation $\IHnodal$}
We denote the nodal interpolation operator by
$\IHnodal : C^0(\Omega) \to \VH$ and it is defined by
\begin{equation*}
(\IHnodal v)(z) = v(z)
\end{equation*}
for all free nodes $z \in \nodesfreeH$. Note that this operator is not
defined for all functions in $V$ for $d \ge 2$, and hence does not
fulfill Assumption~\ref{ass:qio}. Nevertheless, it is of interest to
include, since it is easy to implement and it is well-defined if $V$
is a finite element space on a very fine mesh. This is the case in all
numerical experiments performed in this paper.

\subsubsection{A-weighted projection $\IHproj$}
The A-weighted projective quasi-interpolation operator is
denoted by $\IHproj : V \to \VH$ and is defined by, for all free nodes
$z \in \nodesfreeH$,
\begin{equation*}
  (\IHproj v)(z) = (P_zv)(z),
\end{equation*}
where $P_z$ is the projection $P_zv \in S_H|_{U(z)}$ (functions in
$S_H$ restricted to node patch $U(z)$) such that
$\int_{U(z)} AP_zvw = \int_{U(z)} Avw$ for all $w \in S_H|_{U(z)}$.
This operator was used in \cite{PeSc16} to define the \fine{} space
and to show contrast independent localization error bounds under
quasi-monotonicity assumptions on the coefficient distribution within
the node patches. This assumption can be restrictive for coarse
meshes.

\subsubsection{A-weighted projection with quasi-monotonicity $\IHprojs$}
The A-weighted projective quasi-interpolation operator with guaranteed
quasi-monotonicity is denoted by $\IHprojs : V \to \VH$ and is defined
like $\IHproj$, but with the node patch $U(z)$ replaced by a connected
subset of $U(z)$ that contains $z$ and has (type-$(d-1)$)
quasi-monotone coefficient distribution. See e.g.\ \cite{PeSc16} for
the definition of quasi-monotone coefficient distribution.

\subsection{Experiment: Full error versus patch size and contrast}
\label{sec:num_localization}
Below, we present three different coefficient geometries $\Omega^1$
and study the full error between a reference solution on the finest
grid and the solution from the localized multiscale method with right
hand side correction while varying interpolation operator, patch size
and contrast. In all experiments in this subsection, we let
$H = 2^{-4}$ and solve the right hand side corrected solution
$u_{H,k}$ for
$\IIH \in \{\IHsz, \IHnodal, \IH, \IHinf, \IHproj, \IHprojs\}$,
$k = 1,\ldots,6$, and $\alpha = 10^{-1}, 10^{-2}, \ldots, 10^{-6}$. We
compute a reference solution $u_h$ (with $h=2^{-10}$) and the relative
error $\III{u_h - u_{H,k}}/\III{u_h}$ for each tuple
$(\IIH, \Omega^1, k, \alpha)$. The corresponding error bound for
$\IH$ can be found in Theorem~\ref{lem:full_error}.

\subsubsection{Stripes}
Here $A$ is defined as illustrated in
Figure~\ref{fig:num_k_stripes_coef}. The stripes are located at
distance $1/16$ from each other and their width is $1/128$. With
$H = 2^{-4}$ there is a node in each stripe and
Assumption~\ref{ass:localpoincare} is satisfied. We let
$f = \chi_{[1/4,3/4]^2}$ ($\chi$ is the indicator function) and use
homogeneous Dirichlet boundary conditions on the full boundary. The
relative errors for the the different interpolation operators and
contrasts are presented in
Figure~\ref{fig:num_k_stripes}. Quasi-monotonicity is not satisfied
for this mesh, but would require two additional mesh refinements to be
fulfilled.

\subsubsection{Random balls}
Here $A$ was constructed in the following manner. Starting with a
white background, for every node in the coarse mesh ($H=2^{-4}$),
randomly (chance 50\%) place a black ball of random radius between
$1/128$ and $8/128$ with center at the node. $A$ is defined according
to the convention of black representing $\Omega^1$ and white
representing $\Omega^\alpha$. See
Figure~\ref{fig:num_k_balls0_coef}. We let $f = \chi_{[1/4,3/4]^2}$
and use homogeneous Dirichlet boundary conditions on the full boundary.
The relative errors for the the different interpolation operators and
contrasts are presented in Figure~\ref{fig:num_k_balls0}. We note that
Assumption~\ref{ass:localpoincare} is satisfied since there is a node
in each ball. Quasi-monotonicity is not satisfied.

\subsubsection{Random field}
Here $A$ was constructed by a random process with spatial
correlation. See Figure~\ref{fig:num_k_random_coef}. We use
$f = \phi_{h,x}$ for $x = (1/2, 1/8)$ and impose homogeneous Neumann
boundary conditions on all edge segments except the right-most
($x_2 = 1$), where we impose homogeneous Dirichlet boundary
conditions. The relative errors for the the different interpolation
operators and contrasts are presented in
Figure~\ref{fig:num_k_random}. In this case, neither
Assumption~\ref{ass:localpoincare} nor quasi-monotonicity holds
for $H = 2^{-4}$.

\subsection{Discussion}
\label{sec:discussion}
We discuss the numerical results in the light of the theoretical
findings from previous sections. Although the localization error in
Lemma~\ref{lem:localization_error} is independent of the contrast, we
note that the contrast factor $\alpha^{-1}$ nevertheless enters the
full error bound in Theorem~\ref{lem:full_error}:
\begin{equation*}
  \III{u - u_{H,k}} \le C_{\Omega, \rho, \Cpoinc, \kappa, \Cdiam} \alpha^{-1} k^{d/2}\theta^{k} \|f\|_{L^2(\Omega)}.
\end{equation*}
The contrast factor $\alpha^{-1/2}$ also enters the following bound of
the energy norm of the true solution
\begin{equation*}
  \III{u}^2 \le \|u\|_{L^2(\Omega)} \cdot \|f\|_{L^2(\Omega)} \le C_{\Omega} \alpha^{-1/2} \|f\|_{L^2(\Omega)} \cdot \III{u}.
\end{equation*}
Thus, assuming the solution norm is proportional to this bound, the
relative error plotted in the figures behaves like
\begin{equation*}
  \III{u - u_{H,k}}/\III{u} \approx C_{\Omega, \rho, \Cpoinc, \kappa, \Cdiam} \alpha^{-1/2} k^{d/2}\theta^{k}.
\end{equation*}
We emphasize that $C_{\Omega, \rho, \Cpoinc, \kappa, \Cdiam}$ and
$0 < \theta < 1$ are proved to be independent of $\alpha$ for $\IH$
and $\IHproj$ (under different assumptions), and we can expect to
observe the contrast influencing the plotted relative errors by a
factor at most $\alpha^{-1/2}$. In particular, the decay rate of the
relative error in terms of patch size $k$ is independent of $\alpha$.

The following discussion is based on the results from the experiments,
shown in Figures~\ref{fig:num_k_stripes}--\ref{fig:num_k_random}. We
start by a few interpolation operator specific observations based on
all three experiments.
\begin{itemize}
\item
  The contrast problem is clearly visible for the full node patch
  Scott--Zhang interpolation operator $\IHsz$. It has a clear contrast
  dependent error for all three coefficients. The decay rate
  deteriorates as $\alpha \to 0$ in all cases.

\item
  We note that the novel interpolation operator $\IH$ shows
  contrast independent decay rates in all cases. It is interesting to
  note though, that $\IHinf$ (where $\delta=1$ instead of $1/4$) shows
  better decay rates for all coefficients. This suggests that there is
  room for improving the theoretical results to include this kind of
  operator (see Remark~\ref{rem:classIII}).

\item
  The error from using the nodal interpolation operator $\IHnodal$
  appears not to be contrast dependent, but the decay rate is low
  (except for the stripe coefficient, discussed below). The nodal
  interpolation operator is similar to $\IHdelta$ with
  $\delta = h/H = 2^{-6}$, which satisfies the presented theory on
  contrast independence, albeit with very low $L^2$-stability (large
  $\kappa$). This is probably what deteriorates the decay rate, but
  still keeps it contrast independent.
\end{itemize}
Next, we discuss the results on coefficient specific basis.
\begin{itemize}
\item For the stripes coefficient in Figure~\ref{fig:num_k_stripes},
  all operators except $\IHsz$ show good decay properties.  The nodal
  interpolation operator $\IHnodal$ shows an inverse relationship
  between error and contrast than what is generally expected. One
  interpretation is that when the contrast is high, this coefficient
  effectively constitute a number of weakly coupled 1D-problems. The
  nodal interpolation operator then leads to good localization
  according to the total element localization in 1D, discussed in
  e.g.\ \cite{HuSa07}. This also suggests that subdimensional channels
  (e.g.\ cracks, faults or fibers) can be handled accurately by this
  kind of node variables. The smoothing interpolation operator $\IHsz$
  seems to be particularly ill-suited for this kind of coefficient.

\item The random balls coefficient in Figure~\ref{fig:num_k_balls0}
  satisfies Assumption~\ref{ass:localpoincare}, however the
  quasi-monotonicity assumption does not hold. We can see that the
  decay rate of $\IHproj$ indeed depends on the contrast, while
  $\IHnodal$, $\IH$, $\IHinf$ and $\IHprojs$ enjoy contrast
  independent decay rates. It is interesting to note that the
  modification of $\IHproj$ to $\IHprojs$ (by selecting the
  integration domain to guarantee quasi-monotonicity) was sufficient
  to remove the contrast dependent decay. The operator $\IH$ shows
  contrast independent decay rate with a better rate than $\IHnodal$,
  but it appears to suffer from similar deterioration of decay rate as
  $\IHnodal$.

\item The random field coefficient in Figure~\ref{fig:num_k_random}
  satisfies neither Assumption~\ref{ass:localpoincare} nor
  quasi-mono\-tonicity. Despite this, the numerical results are
  similar to those for the random balls. Although the assumptions are
  not satisfied, the heavily contrast dependent error for $\IHsz$ in
  this experiment suggests that there can still be a gain in accuracy
  using the ideas presented here for coefficients not strictly covered
  by the assumptions.
\end{itemize}

\newcommand{\errorH}[2]{
  \begin{figure}[p]
    \centering
    \begin{subfigure}{0.48\textwidth}
      \centering
      \includegraphics[width=4cm, frame]{#1_coef.png}
      \caption{Coefficient}
      \label{fig:num_#1_coef}
    \end{subfigure}
    
    \begin{subfigure}{.48\textwidth}
      \centering
      \includegraphics[clip=true, trim=0cm 0.2cm 0cm 0cm]{plot_#1_SZfull_rhs}
      \caption{$\IHsz$}
    \end{subfigure}
    \begin{subfigure}{.48\textwidth}
      \centering
      \includegraphics[clip=true, trim=0cm 0.2cm 0cm 0cm]{plot_#1_nodal_rhs}
      \caption{$\IHnodal$}
    \end{subfigure}

    \begin{subfigure}{.48\textwidth}
      \centering
      \includegraphics[clip=true, trim=0cm 0.2cm 0cm 0cm]{plot_#1_SZpaper_rhs}
      \caption{$\IH$}
    \end{subfigure}%
    \begin{subfigure}{.48\textwidth}
      \centering
      \includegraphics[clip=true, trim=0cm 0.2cm 0cm 0cm]{plot_#1_SZsemi_rhs}
      \caption{$\IHinf$}
    \end{subfigure}

    \begin{subfigure}{.48\textwidth}
      \centering
      \includegraphics[clip=true, trim=0cm 0.2cm 0cm 0cm]{plot_#1_AProjectedClement_rhs}
      \caption{$\IHproj$}
    \end{subfigure}
    \begin{subfigure}{.48\textwidth}
      \centering
      \includegraphics[clip=true, trim=0cm 0.2cm 0cm 0cm]{plot_#1_AProjectedClementSpread_rhs}
      \caption{$\IHprojs$}
    \end{subfigure}%

    \begin{subfigure}{.96\textwidth}
      \centering
      \includegraphics[clip=true, trim=0cm 0.5cm 0cm 0cm]{plot_legend}
    \end{subfigure}
    \caption{Error versus $H$, $k=\log_2(H)-1$, #2.}
    \label{fig:num_#1}
  \end{figure}
}

\newcommand{\errork}[2]{
  \begin{figure}[p]
    \centering
    \begin{subfigure}{0.48\textwidth}
      \centering
      \includegraphics[width=4cm, frame]{#1_coef.png}
      \caption{Coefficient}
      \label{fig:num_k_#1_coef}
    \end{subfigure}
    
    \begin{subfigure}{.48\textwidth}
      \centering
      \includegraphics[clip=true, trim=0cm 0.2cm 0cm 0cm]{plot_k_#1_SZfull_rhs}
      \caption{$\IHsz$}
    \end{subfigure}
    \begin{subfigure}{.48\textwidth}
      \centering
      \includegraphics[clip=true, trim=0cm 0.2cm 0cm 0cm]{plot_k_#1_nodal_rhs}
      \caption{$\IHnodal$}
    \end{subfigure}

    \begin{subfigure}{.48\textwidth}
      \centering
      \includegraphics[clip=true, trim=0cm 0.2cm 0cm 0cm]{plot_k_#1_SZpaper_rhs}
      \caption{$\IH$}
    \end{subfigure}%
    \begin{subfigure}{.48\textwidth}
      \centering
      \includegraphics[clip=true, trim=0cm 0.2cm 0cm 0cm]{plot_k_#1_SZsemi_rhs}
      \caption{$\IHinf$}
    \end{subfigure}

    \begin{subfigure}{.48\textwidth}
      \centering
      \includegraphics[clip=true, trim=0cm 0.2cm 0cm 0cm]{plot_k_#1_AProjectedClement_rhs}
      \caption{$\IHproj$}
    \end{subfigure}
    \begin{subfigure}{.48\textwidth}
      \centering
      \includegraphics[clip=true, trim=0cm 0.2cm 0cm 0cm]{plot_k_#1_AProjectedClementSpread_rhs}
      \caption{$\IHprojs$}
    \end{subfigure}%

    \begin{subfigure}{.96\textwidth}
      \centering
      \includegraphics[clip=true, trim=0cm 0.5cm 0cm 0cm]{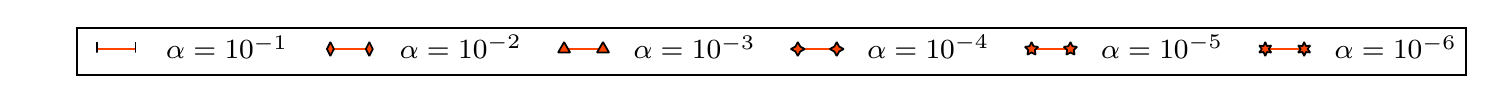}
    \end{subfigure}
    \caption{Error versus $k$, $H=2^{-4}$, $h=2^{-10}$, #2.}
    \label{fig:num_k_#1}
  \end{figure}
}

\errork{stripes}{stripes coefficient}
\errork{balls0}{random balls coefficient}
\errork{random}{random field coefficient}

\FloatBarrier

\bibliographystyle{abbrv} \bibliography{references}

\end{document}